\newtheorem{theorem}{Theorem}[section]
\newtheorem{lemma}{Lemma}[section]
\newtheorem{remark}{Remark}[section]
\newtheorem{example}{Example}[section]
\begin{document}

\begin{frontmatter}

%% Title, authors and addresses

%% use the tnoteref command within \title for footnotes;
%% use the tnotetext command for theassociated footnote;
%% use the fnref command within \author or \address for footnotes;
%% use the fntext command for theassociated footnote;
%% use the corref command within \author for corresponding author footnotes;
%% use the cortext command for theassociated footnote;
%% use the ead command for the email address,
%% and the form \ead[url] for the home page:
%% \title{Title\tnoteref{label1}}
%% \tnotetext[label1]{}
%% \author{Name\corref{cor1}\fnref{label2}}
%% \ead{email address}
%% \ead[url]{home page}
%% \fntext[label2]{}
%% \cortext[cor1]{}
%% \affiliation{organization={},
%%             addressline={},
%%             city={},
%%             postcode={},
%%             state={},
%%             country={}}
%% \fntext[label3]{}

\title{Finite difference method for inhomogeneous fractional Dirichlet problem}

%% use optional labels to link authors explicitly to addresses:
%% \author[label1,label2]{}
%% \affiliation[label1]{organization={},
%%             addressline={},
%%             city={},
%%             postcode={},
%%             state={},
%%             country={}}
%%
%% \affiliation[label2]{organization={},
%%             addressline={},
%%             city={},
%%             postcode={},
%%             state={},
%%             country={}}

\author[a1]{Jing Sun}
\ead{sunj18@lzu.edu.cn}
\author[a1]{Weihua Deng}
\ead{dengwh@lzu.edu.cn}
\author[a1]{Daxin Nie}
\ead{ndx1993@163.com}

\affiliation[a1]{organization={School of Mathematics and Statistics, Gansu Key Laboratory of Applied Mathematics and Complex Systems, Lanzhou University},%Department and Organization
            city={Lanzhou},
            postcode={730000},
            country={P.R. China}}

\begin{abstract}

We make the split of the integral fractional Laplacian as  $(-\Delta)^s u=(-\Delta)(-\Delta)^{s-1}u$, where $s\in(0,\frac{1}{2})\cup(\frac{1}{2},1)$. Based on this splitting, we respectively discretize the one- and two-dimensional integral fractional Laplacian with the inhomogeneous Dirichlet boundary condition and give the corresponding truncation errors with the help of the interpolation estimate. Moreover, the  suitable corrections are proposed to guarantee the convergence in solving the inhomogeneous fractional Dirichlet problem and an $\mathcal{O}(h^{1+\alpha-2s})$ convergence rate is obtained when the solution $u\in C^{1,\alpha}(\bar{\Omega}^{\delta}_{n})$, where $n$ is the dimension of the space, $\alpha\in(\max(0,2s-1),1]$, $\delta$ is a fixed positive constant, and $h$ denotes  mesh size. Finally, the performed numerical experiments confirm the theoretical results.

%In this paper, we first provide a new decomposition for the integral fractional Laplacian according to its Fourier transform form, i.e., $(-\Delta)^s u=(-\Delta)(-\Delta)^{s-1}u$, where $s\in(0,\frac{1}{2})\cup(\frac{1}{2},1)$. Based on this decomposition form, we discretize the one- and two-dimensional integral fractional Laplacian  with the inhomogeneous Dirichlet boundary condition and give the corresponding truncation errors with the help of the interpolation estimation, respectively. Moreover, the  suitable corrections are proposed to guarantee the convergence for solving the inhomogeneous fractional Dirichlet problem and an $\mathcal{O}(h^{1+\alpha-2s})$ convergence rate can be obtained when the solution $u\in C^{1,\alpha}(\bar{\Omega}^{\delta}_{n})$, where $n$ is the dimension of the space, $\alpha\in(\max(0,2s-1),1]$, $\delta$ is a fixed positive constant, and $h$ denotes  mesh size. Finally, numerical experiments agree with the theoretical analysis.
\end{abstract}

%%Graphical abstract
%\begin{graphicalabstract}
%%\includegraphics{grabs}
%\end{graphicalabstract}

%%Research highlights
%\begin{highlights}
%\item Research highlight 1
%
%Fractional Laplacian is of wide interest to both pure and applied mathematicians, and also has extensive applications in physical and engineering community.
%
%\item Research highlight 2
%
%A fundamentally new idea of discretizing the fractional Laplacian is introduced, and the effectiveness of the designed scheme is ensured by the completely theoretical analyses and verified by numerical experiments.
%
%
%\item Research highlight 3
%
%Specific applications for simulating the mean exit time of L\'evy processes under harmonic potential are provided;  the effects of the strengthes of the potential and the L\'evy exponents are uncovered.
%
%
%
%\end{highlights}

\begin{keyword}
one- and two-dimensional integral fractional Laplacian\sep Lagrange interpolation \sep operator splitting  \sep finite difference\sep the inhomogeneous fractional Dirichlet problem \sep error estimates
\end{keyword}

\end{frontmatter}

%% \linenumbers

%% main text
%\linenumbers
\section{Introduction}
Fractional Laplacian is of wide interest to both pure and applied mathematicians, and also has extensive applications in physical and engineering community \cite{Lischke2020, Deng.2018BPftFaTFO}. Based on the splitting of the integral fractional Laplacian, we provide the finite difference approximations for the one- and two-dimensional cases of the operator. Then the approximations are used to numerically solve the inhomogeneous fractional Dirichlet problem, i.e.,
% In this work, we first provide the new finite difference approximations for one- and two-dimensional integral fractional Laplacian, respectively, and then solve the corresponding inhomogeneous fractional Dirichlet problem, i.e.
\begin{equation}\label{eqtosol}
	\left\{
	\begin{aligned}
		&(-\Delta)^{s}u(\mathbf{x})=f(\mathbf{x})\quad {\rm in}~ \Omega_{n},\\
		&u(\mathbf{x})=g(\mathbf{x})\qquad {\rm in}~ \Omega_{n}^{c},\\
	\end{aligned}
	\right .
\end{equation}
where $\Omega_{n}\subset\mathbb{R}^{n}$ $(n=1,2)$ is a bounded domain and $\Omega^{c}_{n}=\mathbb{R}^{n}\backslash\Omega_{n}$ denotes the complement of $\Omega_{n}$; $g(\mathbf{x})=0$ in $\Omega_{n}$, $g(\mathbf{x})\in L^{\infty}(\mathbb{R}^{n})$, and ${\bf supp}~g(\mathbf{x})$ is bounded;  $(-\Delta)^{s}u(\mathbf{x})$ is the integral fractional Laplacian, which can be defined by \cite{Deng.2018BPftFaTFO,acosta2017-1}
\begin{equation}\label{eqdeflap}
	(-\Delta)^{s}u(\mathbf{x})=c_{n,s} {\rm P.V.}\int_{\mathbb{R}^{n}}\frac{u(\mathbf{x})-u(\mathbf{y})}{|\mathbf{x}-\mathbf{y}|^{n+2s}}d\mathbf{y}
\end{equation}
with $c_{n,s}=\frac{2^{2s}s\Gamma(n/2+s)}{\pi^{n/2}\Gamma(1-s)}$, and  $s\in(0,\frac{1}{2})\cup(\frac{1}{2},1)$.
Moreover, \cite{Deng.2018BPftFaTFO,acosta2017-1} show that \eqref{eqdeflap} is equivalent to the following definition given via the pseudodifferential operator over the entire space $\mathbb{R}^{n}$, i.e.,
\begin{equation}\label{fourierlaplace}
	(-\Delta)^{s}u(\boldsymbol{\xi})=\mathcal{F}^{-1}(|\boldsymbol{\xi}|^{2s}\mathcal{F}(u)), \quad s>0,
\end{equation}
where $\mathcal{F}$ and $\mathcal{F}^{-1}$ stand for the Fourier transform and the inverse Fourier transform.

L\'{e}vy process is one of the most commonly used models for describing anomalous diffusion phenomena \cite{Applebaum.2009Lpasc,Sato.1999Lpaidd}, especially $\alpha$-stable L\'{e}vy process. Fractional Laplacian is introduced as the infinitesimal generator of $\alpha$-stable L\'{e}vy process \cite{Deng.2018BPftFaTFO,Gao.2014METaEPfDSDbLN}. Since the singularity and non-locality, numerical approximation of fractional Laplacian is still a challenging topic. In the past few decades, finite difference method has been widely used to approximate fractional derivatives \cite{Gao.2014METaEPfDSDbLN,Alikhanov.2015Andsfttfde,Chen.2014FOASftSFDE,duo_novel_2018,duo_2019,huang2014-1,jin_two_2016-1,jin_correction_2017,Li.2018AoLGFfTFNPP,lin_finite_2007,Nie.2019NAotTdFKEfRaDP,tian_class_2015,zhang2019}. Among them, \cite{jin_two_2016-1,jin_correction_2017,Li.2018AoLGFfTFNPP,lin_finite_2007} discretize time fractional Caputo derivative by $L_{1}$ method and convolution quadrature method; \cite{Chen.2014FOASftSFDE,tian_class_2015} provide weighted and shifted Gr\"{u}nwald difference method to discretize fractional Riesz derivative; as for fractional Laplacian, \cite{Gao.2014METaEPfDSDbLN,duo_novel_2018,duo_2019,huang2014-1} propose the finite difference scheme for solving $d$-dimensional ($d=1,2,3$) fractional Laplace equation with homogeneous Dirichlet boundary condition; moreover, the finite difference schemes provided in \cite{Nie.2019NAotTdFKEfRaDP,zhang2019} for tempered fractional Laplacian with $\lambda=0$ still apply to fractional Laplacian.

Different from the previous finite difference scheme for fractional Laplacian, we split it into the product of $(-\Delta)$ and $(-\Delta)^{s-1}$ according to its Fourier transform form, where $-\Delta$ denotes the classical Laplace operator, and $(-\Delta)^{s-1}$ (the exponent $s-1<0$) is a non-local operator without hyper-singularity (for the detailed definition, see \eqref{eqwithoutsi}). Then we use the Lagrange interpolation to discretize $(-\Delta)^{s-1}$ and the finite difference to $-\Delta$ for one- and two-dimensional cases, respectively. Moreover, some corrections are made to ensure the convergence when using our discretization to solve Eq. \eqref{eqtosol}.
 Compared with the discretizations in \cite{duo_novel_2018,duo_2019}, our scheme can deal with the inhomogeneous fractional Dirichlet problem more easily and accurately. Different from the discretizations proposed in \cite{Nie.2019NAotTdFKEfRaDP,zhang2019},  the current discretization can produce a Toeplitz matrix in one-dimensional case and a block-Toeplitz-Toeplitz-block for two-dimensional case; so fast Fourier Transform can be directly used to speed up the evaluation \cite{Chen.2005Mptaa}. Besides, we use  some examples to verify the effectiveness of the designed scheme, including truncation errors, convergence, and the simulation of the mean exit time of L\'{e}vy motion with generator $\mathcal{A}=\nabla P(x)\cdot\nabla+ (-\Delta)^{s}$; the detailed results can refer to Section 5.

The rest of the paper is organized as follows. In Section 2, we discretize one- and two-dimensional fractional Laplacian by using the Lagrange interpolation and the finite difference method. In Section 3, we provide the truncation errors for one- and two-dimensional cases, respectively. In Section 4, we make some corrections to ensure the convergence in solving the inhomogeneous fractional Dirichlet problem. Section 5 provides some numerical experiments to validate the effectiveness of the designed scheme. We conclude the paper with some discussions in the last section. Throughout the paper, $C$ is a positive constant and may be different at each occurrence.
\section{Numerical discretization of the one- and two-dimensional integral fractional Laplacian}
In this section, we first introduce a new presentation of integral fractional Laplacian according to its Fourier transform form,  and then the detailed discretizations of one- and two-dimensional integral fractional Laplacian based on the Lagrange interpolation and finite difference method are provided.

From \eqref{fourierlaplace}, one can split the fractional Laplacian in frequency domain into
\begin{equation}\label{fourieq}
	\mathcal{F}((-\Delta)^{s}u)(\boldsymbol{\xi})=|\boldsymbol{\xi}|^{2}|\boldsymbol{\xi}|^{2s-2}\mathcal{F}(u).
\end{equation}
So for $s\in(0,\frac{1}{2})\cup(\frac{1}{2},1)$, we get a new presentation of fractional Laplacian  after recovering \eqref{fourieq} to the corresponding time domain, i.e.,
\begin{equation}\label{eqreprefl}
	(-\Delta)^{s}u=(-\Delta)(-\Delta)^{s-1}u,
\end{equation}
where $(-\Delta)$ denotes the classical Laplace operator and $(-\Delta)^{s-1}$ is defined as \cite{Vazquez2012}
\begin{equation}\label{eqwithoutsi}
	(-\Delta)^{s'}u=c_{n,s'}\int_{\mathbb{R}^{n}}|\mathbf{x}-\mathbf{y}|^{-2s'-n}u(\mathbf{y})d\mathbf{y},\qquad s'<0
\end{equation}
with $c_{n,s'}=-\frac{2^{2s'}s'\Gamma(n/2+s')}{\pi^{n/2}\Gamma(1-s')}$ for $s'<0$.

Below, we provide the detailed discretization for one- and two-dimensional fractional Laplacian based on the splitting \eqref{eqreprefl}, respectively.
\subsection{One-dimensional discretization}
Here we focus on the discretization of $(-\Delta)^{s}u$ with the inhomogeneous Dirichlet boundary condition in one-dimensional case.
Suppose the bounded domain $\Omega_{1}=[-L,L]$ and $u=g(x)$ in $\Omega_{1}^{c}$; set $h=2L/N$ with $N\in\mathbb{N}$ and $x_{i}=-L+ih$, $i\in\mathbb{Z}$. Introduce $I_{i}=[x_{i-1},x_{i+1}]\cap\Omega_{1}$,  $i=0,1,2,\ldots,N$. Denote $\phi_{i}(x)$ as the Lagrange basis polynomial on $I_{i}$, i.e.,
\begin{equation}\label{eqdefphii}
	\phi_{i}(x)=\bar{\phi}_{1}(x-x_{i})\chi_{I_{i}}(x),
\end{equation}
where $\chi_{I_{i}}(x)$ is the characteristic function on $I_{i}$ and $\bar{\phi}_{1}(y)$ is defined by
\begin{equation*}
	\bar{\phi}_{1}(y)=\left\{\begin{aligned}
		1-\frac{|y|}{h},&\quad y\in (-h,h),\\
		0,&\quad y\notin (-h,h).
	\end{aligned}\right.
\end{equation*}
Thus $u(x)$ can be approximated by
\begin{equation*}
	u(x)\approx\mathbb{I}_{1}u(x)=\sum_{i=0}^{N}u_{i}\phi_{i}(x)+g(x),
\end{equation*}
where $u_{i}=u(x_{i})$ and $\mathbb{I}_{1}$ means the interpolation operator here. So we can approximate $(-\Delta)^{s-1}u$ by using

\begin{equation*}
	\begin{aligned}
		(-\Delta)_{h}^{s-1}u(x_{i})=& c_{1,s-1}\int_{\Omega_{1}}|x_{i}-y|^{1-2s}\sum_{j=0}^{N}u_{j}\phi_{j}(y)dy\\
		&+c_{1,s-1}\int_{\mathbb{R}}|x_{i}-y|^{1-2s}g(y)dy
		%=& c_{1,s-1}\sum_{j=1}^{N-1}\int_{I_{j}}|x_{i}-y|^{1-2s}\phi_{j}(y)dyu_{j}+R_{i}\\
		=\sum_{j=1}^{N-1}\bar{\omega}_{j-i}u_{j}+R_{i},
	\end{aligned}
\end{equation*}
where, for $0< i,j< N$,
\begin{equation}\label{eqdefomgk}
	\begin{aligned}
		\bar{\omega}_{j-i}=&c_{1,s-1}\int_{I_{j}}|x_{i}-y|^{1-2s}\phi_{j}(y)dy
		=c_{1,s-1}\int_{-h}^{h}|(j-i)h-y|^{1-2s}\bar{\phi}_{1}(y)dy
	\end{aligned}
\end{equation}
and
\begin{equation*}
	R_{i}=c_{1,s-1}\int_{\mathbb{R}}|x_{i}-y|^{1-2s}(u_{0}\phi_{0}(y)+u_{N}\phi_{N}(y)+g(y))dy.
\end{equation*}
As for $(-\Delta)$, we can approximate it by
\begin{equation*}
	(-\Delta)u_i\approx(-\Delta)_{h}u_i=-\frac{u_{i-1}-2u_{i}+u_{i+1}}{h^{2}}.
\end{equation*}
According to \eqref{eqreprefl}, we obtain the approximation of fractional Laplacian $(-\Delta)^{s}u$ with  $s\in(0,\frac{1}{2})\cup(\frac{1}{2},1)$, i.e.,
\begin{equation}\label{eqdefFLH1D}
	\begin{aligned}
		(-\Delta)^{s}u_i\approx(-\Delta)^{s}_{h}u_i=&-\frac{(-\Delta)^{s-1}_{h}u_{i-1}-2(-\Delta)^{s-1}_{h}u_{i}+(-\Delta)^{s-1}_{h}u_{i+1}}{h^{2}}\\
		=&\sum_{j=1}^{N-1}w_{j-i}u_{j}+(-\Delta)_{h}R_{i},%\frac{R_{i-1}-2R_{i}+R_{i+1}}{h^{2}},
	\end{aligned}
\end{equation}
where
\begin{equation}\label{eqdefw}
	w_{i}=(-\Delta)_{h}\bar{\omega}_{i}.%-\frac{\bar{\omega}_{i-1}-2\bar{\omega}_{i}+\bar{\omega}_{i+1}}{h^{2}}.
\end{equation}

\subsection{Two-dimensional discretization}
Here we discretize $(-\Delta)^{s}u$ with the inhomogeneous Dirichlet boundary condition in two-dimensional case. Suppose the bounded domain $\Omega_{2}=[-L,L]\times[-L,L]$, $u=g(x,y)$ in $\Omega_{2}^{c}$, the mesh size $h=2L/N$, $N\in\mathbb{N}$, and $(x_{i},y_{j})=(-L+ih,-L+jh)$, $i,~j\in \mathbb{Z}$. Denote $\phi_{i,j}$ as the Lagrange basis polynomial on $I_{i,j}=[x_{i-1},x_{i+1}]\times[y_{j-1},y_{j+1}]\cap\Omega_{2}$, $i,~j=0,1,2,\ldots,N$, i.e.,
\begin{equation}\label{eqdefphii2D}
	\phi_{i,j}(x,y)=\bar{\phi}_{2}(x-x_{i},y-y_{j})\chi_{I_{i,j}}(x,y),
%	\left\{
%	\begin{aligned}
%		&\left (1-\frac{|x-x_i|}{h}\right )\left (1-\frac{|y-y_j|}{h}\right ),\qquad x\in I_{i,j},\\
%		&0,\quad\qquad\qquad x\notin I_{i,j}.
%	\end{aligned}\right.
\end{equation}
where $\chi_{I_{i,j}}(x,y)$ is the characteristic function on $I_{i,j}$ and $\bar{\phi}_{2}(x,y)$ is defined by
\begin{equation*}
	\bar{\phi}_{2}(x,y)=\left\{\begin{aligned}
		\left (1-\frac{|x|}{h}\right )\left (1-\frac{|y|}{h}\right ),&\quad (x,y)\in (-h,h)\times(-h,h),\\
		0,&\quad (x,y)\notin (-h,h)\times(-h,h).
	\end{aligned}\right.
\end{equation*}
Introducing $\mathbb{I}_{2}$ as the interpolation operator in two space dimensions, one has
\begin{equation*}
	u\approx\mathbb{I}_{2}u=\sum_{i=0}^{N}\sum_{j=0}^{N}u_{i,j}\phi_{i,j}+g(x,y),
\end{equation*}
where $u_{i,j}=u(x_{i},y_{j})$.   Similarly, $(-\Delta)^{s-1}u(x,y)$ can be approximated by
\begin{equation*}
	\begin{aligned}
		&(-\Delta)^{s-1}_{h}u(x_{i},y_{j})
		%		\\
		%		=& c_{2,s-1}\int_{-L}^{L}\int_{-L}^{L}|(x_{i},y_{j})-(\xi,\eta)|^{-2s}\sum_{p=0}^{N}\sum_{q=0}^{N}u_{p,q}\phi_{p,q}(\xi,\eta)d\xi d\eta\\
		%&+c_{2,s-1}\int\int_{\mathbb{R}^2}|(x_{i},y_{j})-(\xi,\eta)|^{-2s}g(\xi,\eta)d\xi d\eta\\
		%		=& c_{2,s-1}\sum_{p=1}^{N-1}\sum_{q=1}^{N-1}\int\int_{I_{p,q}}|(x_{i},y_{j})-(\xi,\eta)|^{-2s}\phi_{p,q}(\xi,\eta)d\xi d\eta u_{p,q}+R_{i,j}\\
		=&\sum_{p=1}^{N-1}\sum_{q=1}^{N-1}\bar{\omega}_{p-i,q-j}u_{p,q}+R_{i,j},
	\end{aligned}
\end{equation*}
where $|(x_i,y_j)-(\xi,\eta)|= \sqrt{(x_i-\xi)^{2}+(y_j-\eta)^{2}}$, and for $0<i,j,p,q<N$,
\begin{equation}\label{eqdefomgk2D}
	\begin{aligned}
		\bar{\omega}_{p-i,q-j}=&c_{2,s-1}\int\int_{I_{p,q}}|(x_{i},y_{j})-(\xi,\eta)|^{-2s}\phi_{p,q}(\xi,\eta)d\xi d\eta\\
		=&c_{2,s-1}\int_{-h}^{h}\int_{-h}^{h}|((p-i)h,(q-j)h)+(\xi,\eta)|^{-2s}\bar{\phi}_{2}(\xi,\eta)d\xi d\eta,
	\end{aligned}
\end{equation}
and
\begin{equation*}
	\begin{aligned}
		R_{i,j}=&c_{2,s-1}\int\int_{\mathbb{R}^{2}}|(x_{i},y_{j})-(\xi,\eta)|^{-2s}g(\xi,\eta)d\xi d\eta\\
		&+c_{2,s-1}\sum_{pq(p-N)(q-N)=0,0\leq p,q\leq N}\int\int_{\mathbb{R}^{2}}|(x_{i},y_{j})-(\xi,\eta)|^{-2s}u_{p,q}\phi_{p,q}d\xi d\eta.	
		%		&+c_{2,s-1}\sum_{p=1}^{N-1}\int\int_{\mathbb{R}^{2}}|(x_{i},y_{j})-(\xi,\eta)|^{-2s}(u_{0,p}\phi_{0,p}+u_{N,p}\phi_{N,p}\\
		%		&+u_{p,0}\phi_{p,0}+u_{p,N}\phi_{p,N})d\xi d\eta\\
		%		&+c_{2,s-1}\int\int_{\mathbb{R}^{2}}|(x_{i},y_{j})-(\xi,\eta)|^{-2s}(u_{0,0}\phi_{0,0}+u_{N,0}\phi_{N,0}\\
		%		&+u_{0,N}\phi_{0,N}+u_{N,N}\phi_{N,N})d\xi d\eta.
	\end{aligned}
\end{equation*}
Next, using the following formula to approximate $(-\Delta)$, i.e.,
\begin{equation*}
	(-\Delta)u_{i,j}\approx(-\Delta)_{h,1}u_{i,j}=-\frac{u_{i-1,j}+u_{i+1,j}+u_{i,j+1}+u_{i,j-1}-4u_{i,j}}{h^{2}},
\end{equation*}
one can get the approximation of $(-\Delta)^{s}u$, i.e.,
\begin{equation}\label{eqdefFLH2D}
	\begin{aligned}
		(-\Delta)^{s}_{h,1}u_{i,j}
		=(-\Delta)_{h,1}(-\Delta)^{s-1}_{h}u_{i,j}
		=\sum_{i=1}^{N-1}\sum_{j=1}^{N-1}w^{(1)}
		_{p-i,q-j}u_{i,j}+(-\Delta)_{h,1}R_{i,j},\\
		%&-\frac{R_{i-1,j}+R_{i+1,j}+R_{i,j-1}+R_{i,j+1}-4R_{i,j}}{h^{2}}
	\end{aligned}
\end{equation}
where
\begin{equation}\label{eqdefw2D}
	w^{(1)}_{i,j}=(-\Delta)_{h,1}\bar{\omega}_{i,j}.%-\frac{\bar{\omega}_{i-1,j}+\bar{\omega}_{i+1,j}+\bar{\omega}_{i,j-1}+\bar{\omega}_{i,j+1}-4\bar{\omega}_{i,j}}{h^{2}}.
\end{equation}
An alternative approximation for $(-\Delta)u$ can be got by  using following formula, i.e.,
\begin{equation*}
	(-\Delta)u_{i,j}\approx(-\Delta)_{h,2}u_{i,j}=-\frac{u_{i-1,j-1}+u_{i+1,j-1}+u_{i-1,j+1}+u_{i+1,j+1}-4u_{i,j}}{2h^{2}}.
\end{equation*}
Also, $(-\Delta)^{s}u$ can be discretized as
\begin{equation}\label{eqdefFLH2D_2}
	\begin{aligned}
		(-\Delta)^{s}_{h,2}u_{i,j}
		=(-\Delta)_{h,2}(-\Delta)^{s-1}_{h}u_{i,j}
		=&\sum_{i=1}^{N-1}\sum_{j=1}^{N-1}w^{(2)}
		_{p-i,q-j}u_{i,j}+(-\Delta)_{h,2}R_{i,j},\\
		%&-\frac{R_{i-1,j-1}+R_{i+1,j-1}+R_{i-1,j+1}+R_{i+1,j+1}-4R_{i,j}}{2h^{2}},
	\end{aligned}
\end{equation}
where
\begin{equation}\label{eqdefw2D_2}
	w^{(2)}_{i,j}=(-\Delta)_{h,2}\bar{\omega}_{i,j}.%-\frac{\bar{\omega}_{i-1,j-1}+\bar{\omega}_{i+1,j-1}+\bar{\omega}_{i-1,j+1}+\bar{\omega}_{i+1,j+1}-4\bar{\omega}_{i,j}}{2h^{2}}.
\end{equation}
Thus $(-\Delta)^{s}$ with  $s\in(0,\frac{1}{2})\cup(\frac{1}{2},1)$ can be approximated by the convex combination of \eqref{eqdefFLH2D} and \eqref{eqdefFLH2D_2}, i.e.,
\begin{equation}\label{eqdefFLH2Dall}
	(-\Delta)^{s}u\approx(-\Delta)^{s}_{h}u=\theta(-\Delta)^{s}_{h,1}u+(1-\theta)(-\Delta)^{s}_{h,2}u,\quad \theta\in[0,1],
\end{equation}
which means
\begin{equation*}
	\begin{aligned}
		(-\Delta)^{s}_{h}u_{i,j}=&\sum_{i=1}^{N-1}\sum_{j=1}^{N-1}w
		_{p-i,q-j}u_{i,j}+\left (\theta(-\Delta)^{s}_{h,1}+(1-\theta)(-\Delta)^{s}_{h,2}\right )R_{i,j},\\
		%&-\theta\frac{R_{i-1,j}+R_{i+1,j}+R_{i,j-1}+R_{i,j+1}-4R_{i,j}}{h^{2}}\\
		%&-(1-\theta)\frac{R_{i-1,j-1}+R_{i+1,j-1}+R_{i-1,j+1}+R_{i+1,j+1}-4R_{i,j}}{2h^{2}},
	\end{aligned}	
\end{equation*}
where
\begin{equation}\label{eqdefw2Dtheta}
	w_{i,j}=\theta w^{(1)}_{i,j}+(1-\theta)w^{(2)}_{i,j}.
\end{equation}
Here $w^{(1)}_{i,j}$ and $w^{(2)}_{i,j}$ are defined in \eqref{eqdefw2D} and \eqref{eqdefw2D_2}, respectively.
\section{Truncation errors}
In this section, we provide the estimate of $\|(-\Delta)^{s}u-(-\Delta)^{s}_{h}u\|_{\infty}$ in one- and two-dimensional cases, respectively. In the following, we denote $\|\cdot\|_{\infty}$ and $\|\cdot\|_{2}$ as the discrete $l^{\infty}$ and $l^{2}$ norms, and $\|\cdot\|_{L^{\infty}}$ as continuous ${L^{\infty}}$ norm.
\begin{theorem}\label{onetwodimendis}
	Let $s\in(0,\frac{1}{2})\cup(\frac{1}{2},1)$. Suppose $(-\Delta)^{s}$ and $(-\Delta)^{s}_{h}$ are defined in \eqref{eqdeflap} and \eqref{eqdefFLH1D} or \eqref{eqdefFLH2Dall}, respectively. If $u\in C^{1,\alpha}(\bar{\Omega}^{\delta}_{n})$ with some fixed constant $\delta> 4h>0$ and $\alpha\in(\max(0,2s-1),1]$, then we have
	\begin{equation*}
		\|((-\Delta)^{s}-(-\Delta)^{s}_{h})u\|_{\infty}\leq Ch^{1+\alpha-2s},\quad 	\|((-\Delta)^{s}-(-\Delta)^{s}_{h})u\|_{2}\leq Ch^{1+\alpha-2s},
	\end{equation*}
	where  $\Omega^{\delta}_{n}=((-L-\delta,L+\delta))^{n}$, $n=1,2$.
\end{theorem}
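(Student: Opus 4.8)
The plan is to exploit the operator splitting directly. Writing $v=(-\Delta)^{s-1}u$ for the continuous smoothing operator and observing that, at the grid points, $(-\Delta)^{s}_h u_i = (-\Delta)_h\big[(-\Delta)^{s-1}(\mathbb{I}_n u)\big](x_i)$ --- since $(-\Delta)^{s-1}_h$ is nothing but the continuous operator $(-\Delta)^{s-1}$ applied to the interpolant $\mathbb{I}_n u$, which already carries the exact boundary datum $g$ --- I would split the truncation error as
\[
((-\Delta)^{s}-(-\Delta)^{s}_h)u(x_i)=\underbrace{[(-\Delta)-(-\Delta)_h]v(x_i)}_{A_i}+\underbrace{(-\Delta)_h\big[(-\Delta)^{s-1}(u-\mathbb{I}_n u)\big](x_i)}_{B_i}.
\]
The whole estimate then reduces to bounding $|A_i|$ and $|B_i|$ by $Ch^{1+\alpha-2s}$ uniformly in $i$, after which the $l^{\infty}$ statement follows.

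For $A_i$ the point is that $(-\Delta)^{s-1}$ is smoothing of order $2(1-s)$, so $v=(-\Delta)^{s-1}u$ gains more than two derivatives over $u\in C^{1,\alpha}$, and one expects $v''$ to be Hölder continuous with exponent $\min(1+\alpha-2s,1)$ in a neighbourhood of $\bar\Omega_n$. I would make this rigorous not through abstract regularity theory but by representing $v''$ via a single integration by parts that transfers one derivative from the kernel $|\cdot|^{1-2s}$ onto $u$ (legitimate because $u\in C^{1,\alpha}$ and $g$ has bounded support), then splitting the resulting integral into a part near the singularity $y=x$ and a smooth far part. The buffer condition $\delta>4h$ guarantees that every node of the stencil lies in $\bar\Omega_n^{\delta}$, where $u$ is $C^{1,\alpha}$. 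Once the local Hölder regularity of $v''$ is in hand, the symmetry of the second difference gives $|A_i|\le C\,[v'']_{C^{0,1+\alpha-2s}}\,h^{1+\alpha-2s}$, with the understanding that in the high-regularity case $1+\alpha-2s>1$ the stencil symmetry upgrades the rate to $O(h^2)\le Ch^{1+\alpha-2s}$.

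The \textbf{main obstacle} is $B_i$, the discrete Laplacian of $(-\Delta)^{s-1}$ applied to the interpolation error $e=u-\mathbb{I}_n u$. Here I would first record $\|e\|_{L^\infty}\le Ch^{1+\alpha}$ from $u\in C^{1,\alpha}$ (piecewise linear in 1D, bilinear in 2D), noting that $e$ is supported in $\bar\Omega_n$ since the interpolant reproduces $g$ off $\Omega_n$. Writing $B_i$ as the integral of the second difference of the kernel against $e$, I would split into a near zone $|y-x_i|\le ch$ and a far zone. In the far zone the second difference is comparable to $\partial_x^2|x_i-y|^{1-2s}\sim|x_i-y|^{-1-2s}$ (in 2D, $\sim|x_i-y|^{-2-2s}$), and $\int_{|y-x_i|>ch}|x_i-y|^{-1-2s}\,dy\sim h^{-2s}$, so the far contribution is $\lesssim h^{-2s}\|e\|_{L^\infty}\sim h^{1+\alpha-2s}$. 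In the near zone I would bound crudely term by term: each of the three (1D) or five (2D) singular contributions of the second difference is integrable, with $\int_{|y-x_i|\le ch}|x_j-y|^{1-2s}\,dy\sim h^{2-2s}$, so that $\tfrac1{h^2}\int_{\rm near}|{\rm 2nd\ diff}|\,|e|\lesssim h^{-2}\cdot h^{2-2s}\cdot h^{1+\alpha}=h^{1+\alpha-2s}$. The delicate part is exactly this near-zone count: the kernel is singular at every stencil node, and one must check that the integrability exponents conspire to leave the power precisely $1+\alpha-2s$; the hypotheses $\alpha>\max(0,2s-1)$ (which forces $1+\alpha-2s>0$) and $\alpha\le1$ are what make these integrals converge with the correct exponent.

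The two-dimensional case runs verbatim with the kernel $|\cdot|^{-2s}$ and the tensor-product bilinear interpolant; the convex combination $\theta(-\Delta)_{h,1}+(1-\theta)(-\Delta)_{h,2}$ in \eqref{eqdefFLH2Dall} merely widens the stencil (again covered by $\delta>4h$) and leaves the order count untouched, because each piece is a second-order consistent stencil for $-\Delta$. Finally, the discrete $l^2$ estimate is a direct consequence of the $l^\infty$ one: $\Omega_n$ is bounded, so with the mesh-weighted $l^2$ norm ($O(h^{-n})$ nodes against the weight $h^{n}$) one has $\|\cdot\|_2\le|\Omega_n|^{1/2}\|\cdot\|_\infty\le C\|\cdot\|_\infty$, and the rate $Ch^{1+\alpha-2s}$ is inherited.
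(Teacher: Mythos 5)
Your proposal is correct and follows essentially the same route as the paper: your split into $A_i=[(-\Delta)-(-\Delta)_h](-\Delta)^{s-1}u$ and $B_i=(-\Delta)_h(-\Delta)^{s-1}(u-\mathbb{I}_n u)$ is exactly the paper's decomposition into terms $\uppercase\expandafter{\romannumeral1}$ and $\uppercase\expandafter{\romannumeral2}$, and your treatment of each (integration by parts to shift a derivative onto $u$ plus a near/far splitting of the kernel for $A_i$; the $\mathcal{O}(h^{1+\alpha})$ interpolation bound against the $\mathcal{O}(h^{-2s})$ integral of the second difference of the kernel for $B_i$) mirrors the paper's cutoff-function argument and its estimate of $\uppercase\expandafter{\romannumeral2}$. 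The only difference is presentational: the paper packages the regularity gain via an explicit truncated kernel $\Phi$ and antiderivatives $\mu^{x}$ rather than asserting H\"older regularity of $v''$ directly.
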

Here, we only provide the proof in two-dimensional case in detail; and the proof in one-dimensional case can be got similarly.
\begin{proof}[Proof of Theorem \ref{onetwodimendis} in two dimensions] For fixed $i,j$, according to \eqref{eqdefFLH2Dall}, we have
	\begin{equation}\label{eqtrunall}
		\begin{aligned}
			|((-\Delta)^{s}-(-\Delta)^{s}_{h})u_{i,j}|\leq &\theta|((-\Delta)^{s}-(-\Delta)^{s}_{h,1})u_{i,j}|\\
			&+(1-\theta)|((-\Delta)^{s}-(-\Delta)^{s}_{h,2})u_{i,j}|,~~\theta\in[0,1].
		\end{aligned}
	\end{equation}
Using the definitions of  $(-\Delta)^{s}$ and $(-\Delta)^{s}_{h,1}$ results in
	\begin{equation*}
		\begin{aligned}
			&|((-\Delta)^{s}-(-\Delta)_{h,1}^{s})u_{i,j}|\\
			\leq &|((-\Delta)(-\Delta)^{s-1}-(-\Delta)_{h,1}(-\Delta)^{s-1})u_{i,j}|\\
			&+|((-\Delta)_{h,1}(-\Delta)^{s-1}-(-\Delta)_{h,1}(-\Delta)_{h}^{s-1})u_{i,j}|\\
			\leq &\uppercase\expandafter{\romannumeral1}+\uppercase\expandafter{\romannumeral2}.
		\end{aligned}
	\end{equation*}
	Let $\Phi(x_{i}-\xi,y_{j}-\eta)\in C^{2}_{0}(\Omega_{2}^{\delta-h})$, which satisfies $\Phi(x_{i}-\xi,y_{j}-\eta)=((x_{i}-\xi)^{2}+(y_{j}-\eta)^{2})^{-s}$ if $(\xi,\eta)\in\Omega^{\delta/2+h}_{2}\backslash(x_{i}-h,x_{i}+h)\times(y_{j}-h,y_{j}+h)$, and
	\begin{equation*}
		\begin{aligned}
			&\left \|\Phi(x,y)\right \|_{L^{\infty}(\mathbb{R}^{2})}\leq Ch^{-2s}; \quad \left \|\frac{\partial\Phi(x,y)}{\partial x}\right \|_{L^{\infty}(\mathbb{R}^{2})},\left \|\frac{\partial \Phi(x,y)}{\partial y}\right \|_{L^{\infty}(\mathbb{R}^{2})}\leq Ch^{-1-2s};\\
			&\left \|\frac{\partial^{2}\Phi(x,y)}{\partial x^{2}}\right \|_{L^{\infty}(\mathbb{R}^{2})},\left \|\frac{\partial^{2} \Phi(x,y)}{\partial y^{2}}\right \|_{L^{\infty}(\mathbb{R}^{2})}\leq Ch^{-2-2s};\\
			&\left \|\frac{\partial^{4}\Phi(x,y)}{\partial x^{4}}\right \|_{L^{\infty}(\mathbb{R}^{2}\backslash\Omega_{2})},\left \|\frac{\partial^{4} \Phi(x,y)}{\partial y^{4}}\right \|_{L^{\infty}(\mathbb{R}^{2}\backslash\Omega_{2})}\leq C.\\
		\end{aligned}
	\end{equation*}
	Introduce the notations
	\begin{equation*}
		\frac{\partial^{2} \mu^{x}}{\partial x^{2}}=\frac{\partial^{2} \mu^{y}}{\partial y^{2}}=u.
	\end{equation*}
%	Integration by parts gives
%	\begin{equation*}
%		\begin{aligned}
%			&\int_{-L-\delta}^{L+\delta}\int_{-L-\delta}^{L+\delta}\Phi(x_{i}-\xi,y_{j}-\eta)u(\xi,\eta)d\xi d\eta\\
%			&\qquad=\int_{-L-\delta}^{L+\delta}\int_{-L-\delta}^{L+\delta}\frac{\partial^{2} \Phi(x_{i}-\xi,y_{j}-\eta)}{\partial \xi^{2}} \mu^{x}(\xi,\eta)d\xi d\eta
%		\end{aligned}
%	\end{equation*}
%	and
%	\begin{equation*}
%		\begin{aligned}
%			&\int_{-L-\delta}^{L+\delta}\int_{-L-\delta}^{L+\delta}\Phi(x_{i}-\xi,y_{j}-\eta)u(\xi,\eta)d\xi d\eta\\
%			&\qquad=\int_{-L-\delta}^{L+\delta}\int_{-L-\delta}^{L+\delta}\frac{\partial^{2} \Phi(x_{i}-\xi,y_{j}-\eta)}{\partial \eta^{2}} \mu^{y}(\xi,\eta)d\xi d\eta.
%		\end{aligned}
%	\end{equation*}
	Here, divide $\uppercase\expandafter{\romannumeral1}$  into two parts, i.e.,
	\begin{equation*}
		\begin{aligned}
			\uppercase\expandafter{\romannumeral1}\leq& C\Bigg |((-\Delta)_{x}-(-\Delta)_{x,h,1})\int\int_{\mathbb{R}^{2}}|(x_{i},y_{j})-(\xi,\eta)|^{-2s}u(\xi,\eta)d\xi d\eta\Bigg |\\
			&+C\Bigg |((-\Delta)_{y}-(-\Delta)_{y,h,1})\int\int_{\mathbb{R}^{2}}|(x_{i},y_{j})-(\xi,\eta)|^{-2s}u(\xi,\eta)d\xi d\eta\Bigg |\\
			\leq& \uppercase\expandafter{\romannumeral1}^{x}+\uppercase\expandafter{\romannumeral1}^{y},
		\end{aligned}
	\end{equation*}
	where $(-\Delta)_{x}=-\frac{\partial^{2}}{\partial x^{2}}$, $(-\Delta)_{y}=-\frac{\partial^{2}}{\partial y^{2}}$ and
	\begin{equation*}
		\begin{aligned}
			&(-\Delta)_{x,h,1}v_{i,j}=-\frac{v_{i-1,j}-2v_{i,j}+v_{i+1,j}}{h^{2}},\ (-\Delta)_{y,h,1}v_{i,j}=-\frac{v_{i,j-1}-2v_{i,j}+v_{i,j+1}}{h^{2}}.\\
		\end{aligned}
	\end{equation*}
	Introduce $\Psi(x_{i}-\xi,y_{j}-\eta)=|(x_{i},y_{j})-(\xi,\eta)|^{-2s}-\Phi(x_{i}-\xi,y_{j}-\eta)$. For $\uppercase\expandafter{\romannumeral1}^{x}$, we find
	\begin{equation*}
		\begin{aligned}
			&\uppercase\expandafter{\romannumeral1}^{x}\leq
			 C\Bigg |((-\Delta)_{x}-(-\Delta)_{x,h,1})\int\int_{\mathbb{R}^{2}}\Phi(x_{i}-\xi,y_{j}-\eta)u(\xi,\eta)d\xi d\eta\Bigg |\\
			&~~+ C\Bigg |((-\Delta)_{x}-(-\Delta)_{x,h,1})\int\int_{\mathbb{R}^{2}}\Psi(x_{i}-\xi,y_{j}-\eta)u(\xi,\eta)d\xi d\eta\Bigg |\\
			&\leq \uppercase\expandafter{\romannumeral1}^{x}_{1}+\uppercase\expandafter{\romannumeral1}^{x}_{2}.
		\end{aligned}
	\end{equation*}
	Introduce $\mathbb{D}^{\delta}_{i,j}=\{(\xi,\eta)|(x_{i}-\xi,y_{j}-\eta)\in \Omega_{2}^{\delta}\}$ and $\mathbb{D}^{\delta,0}_{i,j}=\mathbb{D}^{\delta}_{i,j}\backslash(-h,h)\times(-h,h)$.
	Simple calculations imply
	\begin{equation*}
		\begin{aligned}
			\uppercase\expandafter{\romannumeral1}^{x}_{1}\leq&  C\Bigg |((-\Delta)_{x}-(-\Delta)_{x,h,1})\int\int_{\Omega_{2}^{\delta}}\frac{\partial^{2}\Phi(x_{i}-\xi,y_{j}-\eta)}{\partial \xi^{2}}\mu^{x}(\xi,\eta)d\xi d\eta\Bigg |\\
			\leq&  C\Bigg |((-\Delta)_{x}-(-\Delta)_{x,h,1})\int\int_{\mathbb{D}^{\delta}_{i,j}}\frac{\partial^{2}\Phi(\xi,\eta)}{\partial \xi^{2}}\mu^{x}(x_{i}-\xi,y_{j}-\eta)d\xi d\eta\Bigg |\\
			\leq&  C\Bigg |\int_{-h}^{h}\int_{-h}^{h}\frac{\partial^{2}\Phi(\xi,\eta)}{\partial \xi^{2}}((-\Delta)_{x}-(-\Delta)_{x,h,1})\mu^{x}(x_{i}-\xi,y_{j}-\eta)d\xi d\eta\Bigg |\\
			& +C\Bigg |\int\int_{\mathbb{D}^{\delta,0}_{i,j}}\frac{\partial^{2}\Phi(\xi,\eta)}{\partial \xi^{2}}((-\Delta)_{x}-(-\Delta)_{x,h,1})\mu^{x}(x_{i}-\xi,y_{j}-\eta)d\xi d\eta\Bigg |\\
			\leq& \uppercase\expandafter{\romannumeral1}^{x}_{1,1}+\uppercase\expandafter{\romannumeral1}^{x}_{1,2}.
		\end{aligned}
	\end{equation*}
	By Taylor's expansion, we have $|((-\Delta)_{x}-(-\Delta)_{x,h})v(x_{i})|\leq Ch^{1+\alpha}\|v\|_{C^{3,\alpha}([x_{i-1},x_{i+1}])}$ for $v\in C^{3,\alpha}([x_{i-1},x_{i+1}])$. Thus there holds
	\begin{equation*}
		\begin{aligned}
			\uppercase\expandafter{\romannumeral1}^{x}_{1,1}\leq& Ch^{1+\alpha}\int_{-h}^{h}\int_{-h}^{h}\Bigg |\frac{\partial^{2}\Phi(\xi,\eta)}{\partial \xi^{2}}\Bigg |d\xi d\eta\|u\|_{C^{1,\alpha}(\bar{\Omega}^{\delta}_{2})}\\
			\leq& Ch^{1+\alpha-2s}\|u\|_{C^{1,\alpha}(\bar{\Omega}^{\delta}_{2})}.
		\end{aligned}
	\end{equation*}
	Using the fact
	\begin{equation*}
		\begin{aligned}
			&\left|\frac{\partial^{2}|(x,y)-(\xi,\eta)|^{-2s}}{\partial \xi^{2}}\right|\leq C |(x,y)-(\xi,\eta)|^{-2s-2}\quad {\rm for}~~(x,y)\neq(\xi,\eta),\\
%			\leq &C |(x,y)-(\xi,\eta)|^{-2s-2}+C(x_{i}-\xi)^{2} |(x,y)-(\xi,\eta)|^{-2s-4}\\
%			\leq&C |(x,y)-(\xi,\eta)|^{-2s-2},
		\end{aligned}
	\end{equation*}
	we obtain
	\begin{equation*}
		\begin{aligned}
			\uppercase\expandafter{\romannumeral1}^{x}_{1,2}\leq& Ch^{1+\alpha}\int\int_{\mathbb{D}^{\delta,0}_{i,j}}\Bigg |\frac{\partial^{2}|(x_{i},y_{j})-(\xi,\eta)|}{\partial \xi^{2}}\Bigg |d\xi d\eta\|u\|_{C^{1,\alpha}(\bar{\Omega}^{\delta}_{2})}\\
			\leq& Ch^{1+\alpha-2s}\|u\|_{C^{1,\alpha}(\bar{\Omega}^{\delta}_{2})}.
		\end{aligned}
	\end{equation*}
	Decomposing $\uppercase\expandafter{\romannumeral1}^{x}_{2}$ into three parts leads to
	\begin{align*}
			&\uppercase\expandafter{\romannumeral1}^{x}_{2}\leq	 C\Bigg |(-\Delta)_{x}\int_{x_{i}-h}^{x_{i}+h}\int_{y_{j}-h}^{y_{j}+h}\Psi(x_{i}-\xi,y_{j}-\eta)u(\xi,\eta)d\xi d\eta\Bigg |\\
			&~~+C\Bigg |(-\Delta)_{x,h,1}\int_{x_{i}-h}^{x_{i}+h}\int_{y_{j}-h}^{y_{j}+h}\Psi(x_{i}-\xi,y_{j}-\eta)u(\xi,\eta)d\xi d\eta\Bigg |\\
			&~~+C\Bigg |((-\Delta)_{x}-(-\Delta)_{x,h,1})\int\int_{\Omega_{2}^{c}}\Psi(x_{i}-\xi,y_{j}-\eta)u(\xi,\eta)d\xi d\eta\Bigg |\\
			&~~\leq \uppercase\expandafter{\romannumeral1}^{x}_{2,1}+\uppercase\expandafter{\romannumeral1}^{x}_{2,2}+\uppercase\expandafter{\romannumeral1}^{x}_{2,3}.
	\end{align*}
	For $\uppercase\expandafter{\romannumeral1}^{x}_{2,1}$, we get, for some function $C_{0}(y)$ independent of $x$,
	\begin{equation*}
		\begin{aligned}
			\uppercase\expandafter{\romannumeral1}^{x}_{2,1}\leq &C\Bigg |\frac{\partial}{\partial x}\int_{x_{i}-h}^{x_{i}+h}\int_{y_{j}-h}^{y_{j}+h}\Psi(x_{i}-\xi,y_{j}-\eta)\frac{\partial u(\xi,\eta)}{\partial \xi}d\xi d\eta\Bigg |\\
			\leq &C\Bigg |\frac{\partial}{\partial x}\int_{x_{i}-h}^{x_{i}+h}\int_{y_{j}-h}^{y_{j}+h}\Psi(x_{i}-\xi,y_{j}-\eta)\left (\frac{\partial u(\xi,\eta)}{\partial \xi}-C_{0}(\eta)\right )d\xi d\eta\Bigg |\\
			\leq &C\Bigg |\int_{x_{i}-h}^{x_{i}+h}\int_{y_{j}-h}^{y_{j}+h}\frac{\partial\Psi(x_{i}-\xi,y_{j}-\eta)}{\partial x}\left (\frac{\partial u(\xi,\eta)}{\partial \xi}-C_{0}(\eta)\right )d\xi d\eta\Bigg |.\\
		\end{aligned}
	\end{equation*}
	Choosing $C_{0}(y)=\frac{\partial u(x,y)}{\partial x}|_{x=x_{i}}$ results in
	\begin{equation*}
		\uppercase\expandafter{\romannumeral1}^{x}_{2,1}\leq Ch^{1+\alpha-2s}\|u\|_{C^{1,\alpha}(\bar{\Omega}^{\delta}_{2})}.
	\end{equation*}
	By using $|\Phi(\xi,\eta)|\leq Ch^{-2s}$ and the Taylor expansion, there holds
	\begin{equation*}
		\begin{aligned}
			\uppercase\expandafter{\romannumeral1}^{x}_{2,2}\leq& C\Bigg |\int_{-h}^{h}\int_{-h}^{h}\Psi(\xi,\eta)(-\Delta)_{x,h,1}u(x_{i}-\xi,y_{j}-\eta)d\xi d\eta\Bigg |\\
			\leq &Ch^{1+\alpha-2s}\|u\|_{C^{1,\alpha}(\bar{\Omega}^{\delta}_{2})}.
		\end{aligned}
	\end{equation*}
	Simple calculations imply
	\begin{equation*}
		\begin{aligned}
			\uppercase\expandafter{\romannumeral1}^{x}_{2,3}\leq &Ch^{2}\delta^{-2-2s}\|u\|_{L^{\infty}(\mathbb{R}^{2})}.
		\end{aligned}
	\end{equation*}
	Combining above estimates, one has
	\begin{equation*}
		\uppercase\expandafter{\romannumeral1}^{x}\leq Ch^{1+\alpha-2s}.
	\end{equation*}
	Similarly, there is
	\begin{equation*}
		\uppercase\expandafter{\romannumeral1}^{y}\leq Ch^{1+\alpha-2s}.
	\end{equation*}
	As for $\uppercase\expandafter{\romannumeral2}$, the fact $\|u(\xi,\eta)-\mathbb{I}_{2}u(\xi,\eta)\|_{L^{\infty}(\Omega_{2})}\leq Ch^{1+\alpha}\|u\|_{C^{1,\alpha}(\bar{\Omega}^{\delta}_{2})}$ implies
	\begin{equation*}
		\begin{aligned}
			\uppercase\expandafter{\romannumeral2}\leq &C\left |(-\Delta)_{h,1}\int_{-L}^{L}\int_{-L}^{L}|(x_{i},y_{j})-(\xi,\eta)|^{-2s}(u(\xi,\eta)-\mathbb{I}_{2}u(\xi,\eta))d\xi d\eta\right|\\
			\leq &Ch^{1+\alpha}\int_{-L}^{L}\int_{-L}^{L}|((-\Delta)_{h,1}|(x_{i},y_{j})-(\xi,\eta)|^{-2s})|d\xi d\eta\|u\|_{C^{1,\alpha}(\bar{\Omega}^{\delta}_{2})}.
		\end{aligned}
	\end{equation*}
	Introduce  $\mathbb{D}_{i,j}=\Omega_{2}^{\delta}\backslash(x_{i}-2h,x_{i}+2h)\times(y_{i}-2h,y_{i}+2h)$. Simple calculations give
	\begin{equation*}
		\begin{aligned}
			&\int_{-L}^{L}\int_{-L}^{L}|((-\Delta)_{h,1}|(x_{i},y_{j})-(\xi,\eta)|^{-2s})|d\xi d\eta\\
			\leq&C \int_{y_{j}-2h}^{y_{j}+2h}\int_{x_{i}-2h}^{x_{i}+2h}\left |((-\Delta)_{h,1}|(x_{i},y_{j})-(\xi,\eta)|^{-2s})\right |d\xi d\eta\\
			&+C\int\int_{\mathbb{D}_{i,j}}\left |((-\Delta)_{h,1}|(x_{i},y_{j})-(\xi,\eta)|^{-2s})\right |d\xi d\eta\\
			\leq& Ch^{-2s},
		\end{aligned}
	\end{equation*}
	which leads to
	\begin{equation*}
		\uppercase\expandafter{\romannumeral2}\leq Ch^{1+\alpha-2s}\|u\|_{C^{1,\alpha}(\bar{\Omega}^{\delta}_{2})}.
	\end{equation*}
	Thus according to $\uppercase\expandafter{\romannumeral1}$ and $\uppercase\expandafter{\romannumeral2}$, we have
	\begin{equation*}
		\|((-\Delta)^{s}-(-\Delta)^{s}_{h,1})u\|_{\infty}\leq Ch^{1+\alpha-2s},\quad \|((-\Delta)^{s}-(-\Delta)^{s}_{h,1})u\|_{2}\leq Ch^{1+\alpha-2s}.
	\end{equation*}
	As for $\|((-\Delta)^{s}-(-\Delta)^{s}_{h,2})u\|_{\infty}$, by similar arguments, we can get the estimates
	\begin{equation*}
		\|((-\Delta)^{s}-(-\Delta)^{s}_{h,2})u\|_{\infty}\leq Ch^{1+\alpha-2s},\quad \|((-\Delta)^{s}-(-\Delta)^{s}_{h,2})u\|_{2}\leq Ch^{1+\alpha-2s}.
	\end{equation*}
Collecting the above estimates, the desired results are reached.
\end{proof}

\section{Convergence in solving the inhomogeneous fractional Dirichlet problem}
In this section, we first propose the sufficient conditions for getting the convergence when using the provided discretizations to solve Eq. \eqref{eqtosol}. Then we try to modify the discretizations provided in Sec. 3 according to the  corresponding conditions. Finally, we present the convergence analyses in solving Eq. \eqref{eqtosol}.
%Below we introduce some notations to rewrite the scheme in matrix-vector form.

Now, we  first provide a lemma which is useful for the convergence analyses.
\begin{lemma}[\cite{Axelsson1994}]\label{lemmaGersgorin}
	Let matrix $\mathbf{A}$ be
	\begin{equation*}
		\mathbf{A}=\left[
		\begin{matrix}
			a_{1,1}&a_{1,2}&\cdots& a_{1,N}\\
			a_{2,1}&a_{2,2}&\cdots& a_{2,N}\\
			\vdots&\vdots&\ddots&\vdots\\
			a_{N,1}&a_{N,2}&\cdots&a_{N,N}
		\end{matrix}\right].
	\end{equation*}
	Introduce the discs:  %following discs $C_{i}$ and $C'_{i}$,
	\begin{equation}
		\begin{aligned}
			&C_i=\{z\in \mathbb{C};|z-a_{i,i}|\leq\sum_{i\neq j}|a_{i,j}|\},~1\leq i\leq N,\\
			&C'_i=\{z\in \mathbb{C};|z-a_{i,i}|\leq\sum_{i\neq j}|a_{j,i}|\},~1\leq i\leq N.
		\end{aligned}
	\end{equation}
	The spectrum $\lambda(\mathbf{A})$ of $\mathbf{A}$ is enclosed in the union of $C_{i}$ and $C'_{i}$.
\end{lemma}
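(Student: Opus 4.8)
The plan is to establish the classical \emph{row} form of the Ger\v{s}gorin localization theorem, namely $\lambda(\mathbf{A})\subseteq\bigcup_{i=1}^{N}C_{i}$, and then to obtain the \emph{column} form $\lambda(\mathbf{A})\subseteq\bigcup_{i=1}^{N}C'_{i}$ by transposition. Since the asserted conclusion only requires the spectrum to lie in the union of \emph{all} the discs $C_{i}$ together with all the discs $C'_{i}$, either one of these two containments already implies the statement; proving both merely yields a sharper conclusion at no extra cost. Consequently there is no genuine obstacle, and the entire argument reduces to one well-known device.

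First I would fix an arbitrary eigenvalue $\lambda\in\lambda(\mathbf{A})$ with an associated eigenvector $\mathbf{v}=(v_{1},\ldots,v_{N})^{T}\neq\mathbf{0}$, and I would select an index $k$ realizing the largest modulus among the components, i.e.\ $|v_{k}|=\max_{1\leq j\leq N}|v_{j}|$. Because $\mathbf{v}\neq\mathbf{0}$, this choice guarantees $|v_{k}|>0$, which is precisely the point on which the whole proof turns. Reading off the $k$-th scalar equation of $\mathbf{A}\mathbf{v}=\lambda\mathbf{v}$ gives $\sum_{j=1}^{N}a_{k,j}v_{j}=\lambda v_{k}$, and isolating the diagonal term yields
\begin{equation*}
	(\lambda-a_{k,k})\,v_{k}=\sum_{j\neq k}a_{k,j}v_{j}.
\end{equation*}

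Next I would take moduli on both sides and bound each $|v_{j}|$ by the maximal $|v_{k}|$, obtaining
\begin{equation*}
	|\lambda-a_{k,k}|\,|v_{k}|\leq\sum_{j\neq k}|a_{k,j}|\,|v_{j}|\leq|v_{k}|\sum_{j\neq k}|a_{k,j}|.
\end{equation*}
Dividing through by $|v_{k}|>0$ leaves $|\lambda-a_{k,k}|\leq\sum_{j\neq k}|a_{k,j}|$, which is exactly the condition $\lambda\in C_{k}$, whence $\lambda\in\bigcup_{i}C_{i}$. As $\lambda$ was arbitrary, this proves $\lambda(\mathbf{A})\subseteq\bigcup_{i}C_{i}$.

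Finally, to bring in the column discs, I would apply the containment just established to the transpose $\mathbf{A}^{T}$. Its entries are $(\mathbf{A}^{T})_{i,j}=a_{j,i}$, so its $i$-th row disc is $\{z:|z-a_{i,i}|\leq\sum_{j\neq i}|a_{j,i}|\}=C'_{i}$, and $\lambda(\mathbf{A}^{T})=\lambda(\mathbf{A})$ since transposition preserves the characteristic polynomial. Hence $\lambda(\mathbf{A})\subseteq\bigcup_{i}C'_{i}$ as well, and a fortiori the spectrum is contained in $\bigl(\bigcup_{i}C_{i}\bigr)\cup\bigl(\bigcup_{i}C'_{i}\bigr)$, which is the claim. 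The only conceptual step is the selection of the index of the largest-modulus eigenvector component, the maneuver that converts the eigen-equation into the diagonal-dominance inequality defining the discs.
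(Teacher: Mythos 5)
Your argument is the standard and correct proof of Ger\v{s}gorin's theorem: the choice of the maximal-modulus component of the eigenvector yields $\lambda\in C_{k}$, and applying the same containment to $\mathbf{A}^{T}$ yields the column discs. The paper offers no proof of this lemma at all --- it is quoted verbatim from the cited reference --- so there is nothing to compare against; note only that you in fact prove the sharper statement that the spectrum lies in each of the two unions separately (hence in their intersection), which trivially implies the weaker ``union of $C_{i}$ and $C'_{i}$'' formulation as written.
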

Below we give two theorems to state the sufficient conditions of achieving the convergence in solving Eq. \eqref{eqtosol} in one and two dimensions, respectively.
\iffalse
\begin{theorem}\label{thmcorstd}
	Let $\mathbf{U}$  be solution of Eq. \eqref{eqtosol} and $\mathbf{U}_{h}$ be the solution of
	\begin{equation}\label{eqmatrixform}
		\mathbf{B}_{1}\mathbf{U}_{h}+\mathbf{G}=\mathbf{F},
	\end{equation}
	where $\mathbf{F}=\{f(x_{i})\}_{i=1}^{N-1}=\{f_{i}\}_{i=1}^{N-1}$, $\mathbf{G}=\{G_{i}\}_{i=1}^{N-1}$ and
	\begin{equation*}
		\mathbf{B}_{1}=\left[
		\begin{matrix}
			b_{0}& b_{1}&\cdots &b_{N-2}\\
			b_{1}& b_{0}&\cdots &b_{N-3}\\
			\vdots&\vdots&\ddots&\vdots\\
			b_{N-2}& b_{N-3}&\cdots &b_{0}\\
		\end{matrix}\right ].
	\end{equation*}
	If $\mathbf{B}_{1}$ and $\mathbf{G}$ satisfy following conditions:
	\begin{enumerate}[(1)]
		\item $\|\mathbf{F}-(\mathbf{B}_{1}\mathbf{U}+\mathbf{G})\|_{\infty}\leq Ch^{k}$;
		
		\item $b_{0}>0$, $b_{i}<0$ for $i\neq 0$;
		
		\item$\inf\limits_{i\in[1,N-1]}\sum\limits_{j=1}^{N}b_{|i-j|}>C_{0}>0$,
		
	\end{enumerate}
	then we obtain
	\begin{equation*}
		\begin{aligned}
			&\|U-U_{h}\|_{\infty}<Ch^{k},\quad\|U-U_{h}\|_{2}<Ch^{k}.
		\end{aligned}
	\end{equation*}
\end{theorem}
\fi
\begin{theorem}\label{thmcorstd}
	Given two vectors $\mathbf{F}$, $\mathbf{G}$ and the matrix
	\begin{equation*}
		\mathbf{B}_{1}=\left[
		\begin{matrix}
			b_{0}& b_{1}&\cdots &b_{N-2}\\
			b_{1}& b_{0}&\cdots &b_{N-3}\\
			\vdots&\vdots&\ddots&\vdots\\
			b_{N-2}& b_{N-3}&\cdots &b_{0}\\
		\end{matrix}\right ].
	\end{equation*}
Let $\mathbf{U}_{h}$ be the solution of the linear system
	\begin{equation}\label{eqmatrixform}
		\mathbf{B}_{1}\mathbf{U}_{h}+\mathbf{G}=\mathbf{F}.
	\end{equation}
Assume $\mathbf{U}$, $\mathbf{B}_{1}$, and $\mathbf{G}$ satisfy the conditions:
	\begin{enumerate}[(1)]
		\item $\|\mathbf{F}-(\mathbf{B}_{1}\mathbf{U}+\mathbf{G})\|_{\infty}\leq Ch^{k}$, $\|\mathbf{F}-(\mathbf{B}_{1}\mathbf{U}+\mathbf{G})\|_{2}\leq Ch^{k}$;
		
		\item $b_{0}>0$, $b_{i}<0$ for $i\neq 0$;
		
		\item there exists some constant $C_{0}>0$ such that $\inf\limits_{i=1,2\ldots,N-1}\sum\limits_{j=1}^{N-1}b_{|i-j|}>C_{0}$.
		
	\end{enumerate}
Then we obtain
	\begin{equation*}
		\begin{aligned}
			&\|\mathbf{U}-\mathbf{U}_{h}\|_{\infty}<Ch^{k},\quad\|\mathbf{U}-\mathbf{U}_{h}\|_{2}<Ch^{k}.
		\end{aligned}
	\end{equation*}
\end{theorem}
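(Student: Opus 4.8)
The plan is to turn the problem into the classical \emph{consistency + stability $\Rightarrow$ convergence} argument: first derive an error equation relating $\mathbf{U}-\mathbf{U}_h$ to a truncation residual, and then establish a bound on $\|\mathbf{B}_1^{-1}\|$ that is uniform in $N$ (equivalently in $h$) in both the $l^\infty$ and $l^2$ norms. Concretely, set the residual $\boldsymbol{\tau}=\mathbf{F}-(\mathbf{B}_1\mathbf{U}+\mathbf{G})$, so that Condition (1) gives $\|\boldsymbol{\tau}\|_\infty\le Ch^k$ and $\|\boldsymbol{\tau}\|_2\le Ch^k$. Subtracting the relation $\mathbf{B}_1\mathbf{U}+\mathbf{G}=\mathbf{F}-\boldsymbol{\tau}$ from the defining equation $\mathbf{B}_1\mathbf{U}_h+\mathbf{G}=\mathbf{F}$ of $\mathbf{U}_h$ eliminates both $\mathbf{F}$ and $\mathbf{G}$ and leaves $\mathbf{B}_1(\mathbf{U}_h-\mathbf{U})=\boldsymbol{\tau}$. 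Once $\mathbf{B}_1$ is shown to be invertible, this reads $\mathbf{U}-\mathbf{U}_h=-\mathbf{B}_1^{-1}\boldsymbol{\tau}$, and the two desired estimates follow from $\|\mathbf{U}-\mathbf{U}_h\|_\bullet\le\|\mathbf{B}_1^{-1}\|_\bullet\,\|\boldsymbol{\tau}\|_\bullet$ with $\bullet\in\{\infty,2\}$. The invertibility, established below, also confirms that $\mathbf{U}_h$ is well defined.

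For the $l^2$ bound I would exploit that $\mathbf{B}_1$ is a \emph{symmetric} Toeplitz matrix (its $(i,j)$ entry is $b_{|i-j|}$), so its spectrum is real, and invoke Lemma~\ref{lemmaGersgorin}. Each Gershgorin disc is centered at the diagonal value $b_0$ with radius $R_i=\sum_{j\ne i}|b_{|i-j|}|$. Condition (2) makes every off-diagonal entry negative, hence $\sum_{j\ne i}b_{|i-j|}=-R_i$ and the $i$-th row sum equals $b_0-R_i$. Condition (3) then forces $b_0-R_i>C_0$ for every $i$, so every (real) eigenvalue $\lambda$ obeys $\lambda\ge b_0-R_i>C_0>0$. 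Thus $\mathbf{B}_1$ is symmetric positive definite with $\lambda_{\min}(\mathbf{B}_1)>C_0$, giving $\|\mathbf{B}_1^{-1}\|_2=1/\lambda_{\min}(\mathbf{B}_1)<1/C_0$ independently of $N$.

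For the $l^\infty$ bound the eigenvalue information is not directly useful, and here I would argue through strict diagonal dominance with a uniform gap, which is exactly what Conditions (2)--(3) encode: $b_0-\sum_{j\ne i}|b_{|i-j|}|=b_0-R_i>C_0$ for all $i$. Given any $\mathbf{x}$ with $\mathbf{B}_1\mathbf{x}=\mathbf{y}$, choose the index $k$ at which $|x_k|=\|\mathbf{x}\|_\infty$; isolating the diagonal term in the $k$-th equation and using $|x_j|\le|x_k|$ yields $(b_0-R_k)|x_k|\le|y_k|\le\|\mathbf{y}\|_\infty$, whence $\|\mathbf{x}\|_\infty\le\|\mathbf{y}\|_\infty/C_0$. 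This Varah-type estimate gives $\|\mathbf{B}_1^{-1}\|_\infty\le 1/C_0$, again uniformly in $N$. Combining the two stability bounds with the residual estimates of Condition (1) then yields $\|\mathbf{U}-\mathbf{U}_h\|_\infty\le Ch^k$ and $\|\mathbf{U}-\mathbf{U}_h\|_2\le Ch^k$.

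The step demanding the most care is ensuring that the inverse bounds are \emph{uniform in $N$}, since the size of $\mathbf{B}_1$ grows as $h\to 0$. The diagonal-dominance gap $C_0$ in Condition (3) is precisely the device providing this $N$-independent control. The subtle point worth flagging is that Lemma~\ref{lemmaGersgorin} only localizes the spectrum—hence controls the $l^2$ norm via symmetry—so the $l^\infty$ estimate cannot be read off from Gershgorin directly and must be obtained through the separate diagonal-dominance argument above.
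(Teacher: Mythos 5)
Your proposal is correct and follows essentially the same route as the paper: Gershgorin's lemma combined with symmetry and conditions (2)--(3) to get $\lambda_{\min}(\mathbf{B}_1)>C_0$ for the $l^2$ stability, and a strict-diagonal-dominance (maximum-principle) argument at the index realizing $\|\mathbf{U}_h-\mathbf{U}\|_\infty$ for the $l^\infty$ stability, both applied to the error equation $\mathbf{B}_1(\mathbf{U}_h-\mathbf{U})=\boldsymbol{\tau}$. Your Varah-type formulation of the $l^\infty$ step is a cleaner packaging of the computation the paper carries out entrywise, but the underlying idea is identical.
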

\begin{proof}

	By Lemma \ref{lemmaGersgorin} and the properties of $b_{i}$, we have
	\begin{equation*}
		\lambda_{min}(\mathbf{B}_{1})>C_{0}.
	\end{equation*}
	Let $\mathbf{e}^{\mathbf{U}}=\mathbf{U}_{h}-\mathbf{U}=\{e^{\mathbf{U}}_{i}\}_{i=1}^{N-1}$, $\bar{\mathbf{F}}=\mathbf{F}-(\mathbf{B}_{1}\mathbf{U}+\mathbf{G})=\{\bar{f}_{i}\}_{i=1}^{N-1}$. Then
	\begin{equation*}
		CC_{0}\|\mathbf{e}^{\mathbf{U}}\|^{2}_{2}\leq (\mathbf{B}_{1}\mathbf{e}^{\mathbf{U}},\mathbf{e}^{\mathbf{U}})= (\mathbf{\bar{F}},\mathbf{e}^{\mathbf{U}})\leq \|\mathbf{\bar{F}}\|_{2}\|\mathbf{e}^{\mathbf{U}}\|_{2},
	\end{equation*}
	which leads to $\|\mathbf{e}^{\mathbf{U}}\|_{2}\leq CC_{0}^{-1}\|\mathbf{\bar{F}}\|_{2}$.
	Assuming $\|\mathbf{e}^{\mathbf{U}}\|_{\infty}=|e^{\mathbf{U}}_{p}|$, we have
	\begin{equation*}
		\begin{aligned}
			&e^{\mathbf{U}}_{p}(\bar{f}_{p}-CC_{0}e^{\mathbf{U}}_{p})
			=e^{\mathbf{U}}_{p}\left (\sum_{i=1}^{i=N-1}b_{|p-i|}e^{\mathbf{U}}_{i}-CC_{0}e^{\mathbf{U}}_{p}\right )\\
			&\quad=e^{\mathbf{U}}_{p}(\sum_{i=1,i\neq p}^{i=N-1}b_{|p-i|}e^{\mathbf{U}}_{i}+(b_{0}-CC_{0})e^{\mathbf{U}}_{p})
			\geq e^{\mathbf{U}}_{p}(\sum_{i=1,i\neq p                  }^{i=N-1}b_{|p-i|}(e^{\mathbf{U}}_{i}-e^{\mathbf{U}}_{p}))\geq 0,\\
		\end{aligned}
	\end{equation*}
	which yields
	\begin{equation*}
		\|\mathbf{e}^{\mathbf{U}}\|_{\infty}\leq CC_{0}^{-1}|\bar{f}_{p}|\leq CC_{0}^{-1}\|\mathbf{\bar{F}}\|_{\infty}.
	\end{equation*}
	Combining the first condition, we can get the desired results.
\end{proof}

Similarly, for the two-dimensional case, we find
\begin{theorem}\label{thmcorstd2}
%	Suppose that
%	\begin{equation*}
%		\begin{aligned}
%			&\mathbf{U}=[u_{1,1},u_{1,2}\ldots,u_{1,N-1},u_{2,1},\ldots,u_{N-1,N-1}];\\
%			&\mathbf{U}_{h}=[u^{h}_{1,1},u^{h}_{1,2}\ldots,u^{h}_{1,N-1},u^{h}_{2,1},\ldots,u^{h}_{N-1,N-1}];\\
%			&\mathbf{F}=[f(x_{1},y_{1}),f(x_{1},y_{2})\ldots,f(x_{1},y_{N-1}),f(x_{2},y_{1}),\ldots,f(x_{N-1},y_{N-1})],\\
%		\end{aligned}
%	\end{equation*}
Suppose $\mathbf{U}$, $\mathbf{U}_{h}$, $\mathbf{G}$, and $\mathbf{F}$ satisfy
	\begin{equation}\label{eqmatrixform2}
		\mathbf{B}_{2}\mathbf{U}_{h}+\mathbf{G}=\mathbf{F},
	\end{equation}
	and
	\begin{equation*}
		\|\mathbf{F}-(\mathbf{B}_{2}\mathbf{U}+\mathbf{G})\|_{\infty}\leq Ch^{k},\quad \|\mathbf{F}-(\mathbf{B}_{2}\mathbf{U}+\mathbf{G})\|_{2}\leq Ch^{k}.
	\end{equation*}
	Here
	\begin{equation*}
		\mathbf{B}_{2}=\left[
		\begin{matrix}
			\mathbf{T}_{0}& \mathbf{T}_{1}&\cdots &\mathbf{T}_{N-1}\\
			\mathbf{T}_{-1}& \mathbf{T}_{0}&\cdots &\mathbf{T}_{N-2}\\
			\vdots&\vdots&\ddots&\vdots\\
			\mathbf{T}_{-N+1}& \mathbf{T}_{-N+2}&\cdots &\mathbf{T}_{0}
		\end{matrix}
		\right ],~ \mathbf{T}_{k}=\left[
		\begin{matrix}
			t_{k,0}& t_{k,1}&\cdots &t_{k,N-2}\\
			t_{k,1}& t_{k,0}&\cdots &t_{k,N-3}\\
			\vdots&\vdots&\ddots&\vdots\\
			t_{k,N-2}& t_{k,N-3}&\cdots &t_{k,0}
		\end{matrix}
		\right ].
	\end{equation*}
Assume the following conditions are satisfied,
	\begin{enumerate}[(1)]
		\item $t_{k,i}>0$ for $k=i=0$, otherwise, $t_{k,i}<0$;
		\item $\inf_{p,q=1,\ldots,N-1}\sum_{i,j=1}^{N-1}t_{|p-i|,|q-j|}>C_{0}>0$.
	\end{enumerate}
Then one has
	\begin{equation*}
		\begin{aligned}
			&\|\mathbf{U}-\mathbf{U}_{h}\|_{\infty}<Ch^{k},\quad\|\mathbf{U}-\mathbf{U}_{h}\|_{2}<Ch^{k}.
		\end{aligned}
	\end{equation*}
\end{theorem}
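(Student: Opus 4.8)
The plan is to mirror the argument used for Theorem~\ref{thmcorstd}, now carried out over the doubly-indexed unknowns and the block-Toeplitz-Toeplitz-block matrix $\mathbf{B}_2$. The starting observation is that, because each inner block $\mathbf{T}_k$ is a symmetric Toeplitz matrix and $t_{-k,i}=t_{k,i}$, the matrix $\mathbf{B}_2$ is symmetric, and the entry coupling the unknown at $(p,q)$ with the unknown at $(i,j)$ is exactly $t_{|p-i|,|q-j|}$. Hence its eigenvalues are real and Lemma~\ref{lemmaGersgorin} applies directly after flattening the two indices into one.

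First I would locate $\lambda_{\min}(\mathbf{B}_2)$. For the row indexed by $(p,q)$ the diagonal entry is $t_{0,0}$, and by the sign condition~(1) the off-diagonal absolute row sum equals $\sum_{(i,j)\neq(p,q)}|t_{|p-i|,|q-j|}|=-\sum_{(i,j)\neq(p,q)}t_{|p-i|,|q-j|}$. Therefore the leftmost point of the corresponding Gershgorin disc is $t_{0,0}+\sum_{(i,j)\neq(p,q)}t_{|p-i|,|q-j|}=\sum_{i,j}t_{|p-i|,|q-j|}$, which exceeds $C_0$ by condition~(2). Since $\mathbf{B}_2$ is symmetric, all eigenvalues are real and lie to the right of $C_0$, so $\lambda_{\min}(\mathbf{B}_2)>C_0$.

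Next I would set $\mathbf{e}^{\mathbf{U}}=\mathbf{U}_h-\mathbf{U}$ and $\bar{\mathbf{F}}=\mathbf{F}-(\mathbf{B}_2\mathbf{U}+\mathbf{G})$; subtracting $\mathbf{B}_2\mathbf{U}+\mathbf{G}$ from \eqref{eqmatrixform2} gives $\mathbf{B}_2\mathbf{e}^{\mathbf{U}}=\bar{\mathbf{F}}$. The $l^2$ bound then follows from coercivity, $C_0\|\mathbf{e}^{\mathbf{U}}\|_2^2\leq(\mathbf{B}_2\mathbf{e}^{\mathbf{U}},\mathbf{e}^{\mathbf{U}})=(\bar{\mathbf{F}},\mathbf{e}^{\mathbf{U}})\leq\|\bar{\mathbf{F}}\|_2\|\mathbf{e}^{\mathbf{U}}\|_2$, so $\|\mathbf{e}^{\mathbf{U}}\|_2\leq C_0^{-1}\|\bar{\mathbf{F}}\|_2$. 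For the $l^\infty$ bound I would reproduce the discrete-maximum-principle computation of Theorem~\ref{thmcorstd}: choosing $(p,q)$ with $|e^{\mathbf{U}}_{p,q}|=\|\mathbf{e}^{\mathbf{U}}\|_\infty$ and writing $\bar f_{p,q}=(\mathbf{B}_2\mathbf{e}^{\mathbf{U}})_{p,q}$, one obtains $e^{\mathbf{U}}_{p,q}(\bar f_{p,q}-C_0 e^{\mathbf{U}}_{p,q})\geq e^{\mathbf{U}}_{p,q}\sum_{(i,j)\neq(p,q)}t_{|p-i|,|q-j|}(e^{\mathbf{U}}_{i,j}-e^{\mathbf{U}}_{p,q})\geq 0$, where non-negativity uses $t_{|p-i|,|q-j|}<0$ together with $|e^{\mathbf{U}}_{i,j}|\leq|e^{\mathbf{U}}_{p,q}|$, and the discarded term is non-negative because the row sum exceeds $C_0$. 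This yields $\|\mathbf{e}^{\mathbf{U}}\|_\infty\leq C_0^{-1}|\bar f_{p,q}|\leq C_0^{-1}\|\bar{\mathbf{F}}\|_\infty$. Invoking the assumed residual bounds $\|\bar{\mathbf{F}}\|_\infty,\|\bar{\mathbf{F}}\|_2\leq Ch^k$ closes both estimates.

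Conceptually there is no new analytic difficulty beyond the one-dimensional theorem; the only thing that genuinely needs care is the bookkeeping of the two-index structure, namely confirming that the block layout of $\mathbf{B}_2$ really produces the symmetric coupling $t_{|p-i|,|q-j|}$ so that both the Gershgorin localization and the sign pattern exploited in the maximum-principle step carry over verbatim. Once the doubly-indexed rows are flattened into a single index, the two arguments are identical to the 1D case, so I expect the main (and modest) obstacle to be only this indexing verification rather than any new estimate.
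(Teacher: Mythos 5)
Your proposal is correct and follows exactly the route the paper intends: the paper proves the one-dimensional Theorem~\ref{thmcorstd} via Lemma~\ref{lemmaGersgorin} plus the coercivity and discrete-maximum-principle arguments, and states Theorem~\ref{thmcorstd2} with only the remark that it follows ``similarly,'' which is precisely the flattening-of-indices adaptation you carry out. Your explicit verification that the block layout yields the symmetric coupling $t_{|p-i|,|q-j|}$ is the only bookkeeping the paper leaves implicit, and you handle it correctly.
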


\subsection{Corrections for the one- and two-dimensional discretizations}
From the above two theorems, we need to change some properties of the weights produced by the discretization in Sec. 3 for one- and two-dimensional cases.

\subsubsection{One-dimensional case}
Here we provide a lemma to state the properties of weights $w_{i}$ defined in \eqref{eqdefw}.
\begin{theorem}\label{thmfraweightprop1d}
	Let $w_{i}$ be defined in \eqref{eqdefw}. Then $w_{i}$ satisfies:
	\begin{equation*}
		\begin{aligned}
			& w_{i}<0,~~|i|\geq 2; \quad w_{i}=w_{-i},~~i \geq 0;\\
			&\sum_{i=-N+1}^{N-1}w_{i}\geq CL^{-2s},
		\end{aligned}
	\end{equation*}
where $2L$ means the length of $\Omega_1$.
\end{theorem}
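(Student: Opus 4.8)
The plan is to reduce all three statements to properties of the single scalar function
$$g(t)=\int_{-h}^{h}|t-y|^{1-2s}\bar{\phi}_{1}(y)\,dy,$$
so that, by \eqref{eqdefomgk} and \eqref{eqdefw}, $\bar{\omega}_{k}=c_{1,s-1}g(kh)$ and
$$w_{k}=(-\Delta)_{h}\bar{\omega}_{k}=-\frac{c_{1,s-1}}{h^{2}}\bigl(g((k-1)h)-2g(kh)+g((k+1)h)\bigr).$$
First I would record two elementary facts. Since $\bar{\phi}_{1}$ is even and the domain $(-h,h)$ is symmetric, the substitution $y\mapsto-y$ gives $g(-t)=g(t)$; hence $\bar{\omega}_{-k}=\bar{\omega}_{k}$, and because the three-point stencil is symmetric this immediately yields $w_{-k}=w_{k}$, the second claim. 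I would also settle the sign of the prefactor once and for all: writing $c_{1,s-1}$ out from \eqref{eqwithoutsi} shows $c_{1,s-1}>0$ for $s\in(\tfrac12,1)$ and $c_{1,s-1}<0$ for $s\in(0,\tfrac12)$, the sign being governed by $\Gamma(s-\tfrac12)$.

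For the negativity $w_{i}<0$, $|i|\ge2$, symmetry lets me assume $i\ge2$. For such $i$ and every $y\in(-h,h)$ the three nodes $(i-1)h-y<ih-y<(i+1)h-y$ are strictly positive and equally spaced, so I may replace $|\cdot|^{1-2s}$ by $\psi(t):=t^{1-2s}$ and write $w_{i}=-c_{1,s-1}h^{-2}\int_{-h}^{h}\bigl[\psi((i-1)h-y)-2\psi(ih-y)+\psi((i+1)h-y)\bigr]\bar{\phi}_{1}(y)\,dy$. Since $\psi''(t)=(1-2s)(-2s)t^{-1-2s}$ keeps a fixed sign on $(0,\infty)$ — $\psi$ is strictly convex for $s>\tfrac12$ and strictly concave for $s<\tfrac12$ — the centered second difference in the bracket has a fixed sign for each $y$, so the (convergent) integral inherits it. Multiplying by $-c_{1,s-1}$ and examining the two regimes separately ($s>\tfrac12$: positive constant times positive integrand; $s<\tfrac12$: negative times negative) gives $w_{i}<0$ in both cases.

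For the summation bound I would telescope. Writing the second difference as $\Delta_{i}-\Delta_{i-1}$ with $\Delta_{i}=\bar{\omega}_{i+1}-\bar{\omega}_{i}$, the sum collapses to boundary terms, and using $\bar{\omega}_{-k}=\bar{\omega}_{k}$ one obtains $\sum_{i=-N+1}^{N-1}w_{i}=\frac{2}{h^{2}}(\bar{\omega}_{N-1}-\bar{\omega}_{N})=\frac{2c_{1,s-1}}{h^{2}}\bigl(g((N-1)h)-g(Nh)\bigr)$. Here $(N-1)h-y$ and $Nh-y$ are bounded away from zero for $N$ large (guaranteed by $\delta>4h$), so the mean value theorem gives $g((N-1)h)-g(Nh)=-h(1-2s)\int_{-h}^{h}\xi(y)^{-2s}\bar{\phi}_{1}(y)\,dy$ with $\xi(y)\in[(N-2)h,(N+1)h]$. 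Hence $\sum_{i}w_{i}=\frac{\kappa}{h}\int_{-h}^{h}\xi(y)^{-2s}\bar{\phi}_{1}(y)\,dy$ where $\kappa:=-2c_{1,s-1}(1-2s)>0$ (again verifying both regimes). Bounding $\xi(y)\le(N+1)h\le4L$ together with $\int_{-h}^{h}\bar{\phi}_{1}=h$ and $Nh=2L$ then yields $\sum_{i}w_{i}\ge\kappa(4L)^{-2s}=CL^{-2s}$.

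The routine parts are the change of variables, the telescoping, and the interchange of the difference operator with the integral. The one genuinely delicate point is the sign bookkeeping: both $c_{1,s-1}$ and the relevant derivatives of $\psi$ flip sign as $s$ crosses $\tfrac12$, so I would carry the two cases $s\in(0,\tfrac12)$ and $s\in(\tfrac12,1)$ side by side and confirm that the products $-c_{1,s-1}\psi''$ (for negativity) and $\kappa=-2c_{1,s-1}(1-2s)$ (for the sum) are positive in both — which is exactly what makes the conclusion regime-independent.
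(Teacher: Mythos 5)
Your proof is correct and follows essentially the same route as the paper's: the same sign analysis of $c_{1,s-1}$ played against the convexity/concavity of $t\mapsto t^{1-2s}$ for the negativity of $w_{i}$, $|i|\ge 2$, and the same telescoping of the sum to $2(\bar{\omega}_{N-1}-\bar{\omega}_{N})/h^{2}$ followed by a mean-value bound giving $CL^{-2s}$. Your write-up is merely a bit more explicit than the paper's (spelling out the symmetry via the evenness of $\bar{\phi}_{1}$ and the positive constant $\kappa=-2c_{1,s-1}(1-2s)$), with no substantive difference in approach.
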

\begin{proof}
	
	The definition of $c_{1,s-1}$ and simple calculations give, for $\zeta>h$,
	\begin{equation*}
		\begin{aligned}
			c_{1,s-1}<0,~~ (\zeta-h)^{1-2s}-2\zeta^{1-2s}+(\zeta+h)^{1-2s}<0, ~~{\rm for }~ s<\frac{1}{2} ;\\
			c_{1,s-1}>0,~~ (\zeta-h)^{1-2s}-2\zeta^{1-2s}+(\zeta+h)^{1-2s}>0,~~ {\rm for }~ s>\frac{1}{2},
		\end{aligned}
	\end{equation*}
	which leads to $w_{i}<0$, $|i|\geq 2$.
	As for $w_{1}$, simple calculations give
	\begin{equation}\label{weight1 11}
		w_{1}=-c_{1,s-1}h^{-2s}\frac{7 - 2^{5 - 2s} + 3^{3 -2s}}{(2s-3)(2s-2)}.
	\end{equation}
	Summing $w_{i}$ from $-N+1$ to $N-1$ gives
	\begin{equation*}
		\begin{aligned}
			\sum_{i=-N+1}^{N-1}w_{i}
			= &\frac{1}{h^{2}}\sum_{i=-N+1}^{N-1}(2\bar{\omega}_{i}-\bar{\omega}_{i+1}-\bar{\omega}_{i-1})
			=2\frac{\bar{\omega}_{N-1}-\bar{\omega}_{N}}{h^{2}}.
		\end{aligned}
	\end{equation*}	
	According to the definitions of $\bar{\omega}_{N}$, we have
	\begin{equation*}
		\begin{aligned}
			\frac{\bar{\omega}_{N-1}-\bar{\omega}_{N}}{h^{2}}
			= &\frac{c_{1,s-1}\int_{-h}^{h}(((N-1)h+\zeta)^{1-2s}-(Nh+\zeta)^{1-2s})\bar{\phi}_{1}(\zeta)d\zeta}{h^{2}}\\
			\geq& C\frac{\int_{-h}^{h}L^{-2s}\bar{\phi}_{1}(\zeta)d\zeta}{h}
			\geq CL^{-2s},
		\end{aligned}
	\end{equation*}
	which leads to desired results.
\end{proof}
From Theorem \ref{thmcorstd} and the fact $w_{1}>0$  for some $s\in(0,\frac{1}{2})$ (see \eqref{weight1 11}), we find that the numerical scheme constructed by \eqref{eqdefFLH1D} may not be effective. To make the $w_{i}$ satisfy the condition of Theorem \ref{thmcorstd} and get an effective numerical scheme, we do the modifications for $\bar{\omega}_{0}$, i.e.,
\begin{equation}\label{eqdefw0M}
	\bar{\omega}^{M}_{0}=\left\{
	\begin{aligned}
		&0\qquad {\rm if}~w_{1}\geq 0,\\
		&\bar{\omega}_{0}\qquad {\rm if}~w_{1}<0.
	\end{aligned}
	\right.
\end{equation}
Then we obtain a modified scheme
\begin{equation}\label{modifiedscheme1}
	\begin{aligned}
		(-\Delta)_{h}^{s}u(x)\approx(-\Delta)_{h,M}^{s}u(x_{i})=&\sum_{j=1}^{N-1}w^{M}_{j-i}u_{j}+R_{i},
	\end{aligned}
\end{equation}
where
\begin{equation*}
	\begin{aligned}
		w^{M}_{0}=&-\frac{\bar{\omega}_{-1}-2\bar{\omega}^{M}_{0}+\bar{\omega}_{1}}{h^{2}},\quad w^{M}_{i}=w_{i},\quad |i|\geq 2,\\
		w^{M}_{i}=&-\frac{\bar{\omega}^{M}_{0}-2\bar{\omega}_{1}+\bar{\omega}_{2}}{h^{2}},\quad |i|=1.
	\end{aligned}
\end{equation*}
By the definitions of $w^{M}_{1}$ and $\bar{\omega}_{i}$, it is easy to check that $w^{M}_{1}<0$.\\

Next, we present the truncation error of the modified discretization \eqref{modifiedscheme1}.
\begin{theorem}\label{thmfra1dmd}
	Let $s\in(0,\frac{1}{2})\cup(\frac{1}{2},1)$. $(-\Delta)^{s}$ and $(-\Delta)^{s}_{h,M}$ are defined in \eqref{eqdeflap} and \eqref{modifiedscheme1}, respectively. If $u\in C^{1,\alpha}(\bar{\Omega}^{\delta}_{1})$ with some fixed constant $\delta>4h>0$ and $\alpha\in(\max(0,2s-1),1]$, then we have
	\begin{equation*}
		\|((-\Delta)^{s}-(-\Delta)^{s}_{h,M})u\|_{\infty}\leq Ch^{1+\alpha-2s},\quad \|((-\Delta)^{s}-(-\Delta)^{s}_{h,M})u\|_{2}\leq Ch^{1+\alpha-2s},
	\end{equation*}
	where  $\Omega^{\delta}_{1}=(-L-\delta,L+\delta)$.
\end{theorem}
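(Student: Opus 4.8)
The plan is to treat the modified scheme as a rank-controlled perturbation of the scheme already analyzed in Theorem~\ref{onetwodimendis}, so that the new truncation error follows from the old one together with an explicit correction. First I would split by the triangle inequality,
\[
\|((-\Delta)^{s}-(-\Delta)^{s}_{h,M})u\|_{\infty}\leq \|((-\Delta)^{s}-(-\Delta)^{s}_{h})u\|_{\infty}+\|((-\Delta)^{s}_{h}-(-\Delta)^{s}_{h,M})u\|_{\infty},
\]
and likewise in $\|\cdot\|_{2}$. The first term is $\mathcal{O}(h^{1+\alpha-2s})$ by Theorem~\ref{onetwodimendis} applied with $n=1$, so everything reduces to sizing the correction term produced by replacing $\bar{\omega}_{0}$ with $\bar{\omega}^{M}_{0}$.

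When $w_{1}<0$ no modification occurs, $\bar{\omega}^{M}_{0}=\bar{\omega}_{0}$, the two schemes coincide, and the correction term vanishes; the claim is then immediate. The substantive case is $w_{1}\geq 0$, where $\bar{\omega}^{M}_{0}=0$. Using the definitions of $w^{M}_{0},w^{M}_{\pm1}$, the symmetry $\bar{\omega}_{-1}=\bar{\omega}_{1}$, and $w^{M}_{i}=w_{i}$ for $|i|\geq 2$, a direct computation of the weight differences gives, for every interior index $i$,
\[
((-\Delta)^{s}_{h,M}-(-\Delta)^{s}_{h})u_{i}=(w^{M}_{0}-w_{0})u_{i}+(w^{M}_{1}-w_{1})(u_{i-1}+u_{i+1})=\frac{\bar{\omega}_{0}}{h^{2}}(u_{i-1}-2u_{i}+u_{i+1})=-\bar{\omega}_{0}\,(-\Delta)_{h}u_{i}.
\]
Thus the correction factors cleanly into the scalar $\bar{\omega}_{0}$ times the ordinary central discrete Laplacian of $u$, which is exactly the structure one wants to exploit.

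It then remains to estimate the two factors separately. For the scalar, substituting $y=ht$ in \eqref{eqdefomgk} shows $\bar{\omega}_{0}=c_{1,s-1}\int_{-h}^{h}|y|^{1-2s}\bar{\phi}_{1}(y)\,dy=Ch^{2-2s}$, so $|\bar{\omega}_{0}|\leq Ch^{2-2s}$. For the discrete Laplacian, since $u\in C^{1,\alpha}(\bar{\Omega}^{\delta}_{1})$ and $\delta>4h$ guarantees $[x_{i-1},x_{i+1}]\subset\bar{\Omega}^{\delta}_{1}$ for every interior index, I would apply the mean value theorem to $u_{i+1}-u_{i}$ and $u_{i}-u_{i-1}$ and then the H\"older continuity of $u'$ to obtain
\[
|(-\Delta)_{h}u_{i}|=\frac{|u'(\xi_{+})-u'(\xi_{-})|}{h}\leq\frac{C(2h)^{\alpha}}{h}\|u\|_{C^{1,\alpha}(\bar{\Omega}^{\delta}_{1})}\leq Ch^{\alpha-1}\|u\|_{C^{1,\alpha}(\bar{\Omega}^{\delta}_{1})}.
\]
Multiplying the two bounds yields $|((-\Delta)^{s}_{h}-(-\Delta)^{s}_{h,M})u_{i}|\leq Ch^{2-2s}\cdot h^{\alpha-1}=Ch^{1+\alpha-2s}$ uniformly in $i$; since this pointwise bound is uniform it transfers to both $\|\cdot\|_{\infty}$ and $\|\cdot\|_{2}$ with the conventions of Theorem~\ref{onetwodimendis}, and combining with the first term completes the proof.

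The main obstacle is the regularity bookkeeping rather than any single hard estimate: recognizing that the modification is algebraically equivalent to subtracting $\bar{\omega}_{0}(-\Delta)_{h}u_{i}$ is what isolates a factor of order $h^{2-2s}$, but because $u$ is merely $C^{1,\alpha}$ one cannot control $(-\Delta)_{h}u_{i}$ by $\|u''\|_{\infty}$; it must instead be estimated at order $h^{\alpha-1}$ through the H\"older seminorm of $u'$, and it is precisely the product $h^{2-2s}\cdot h^{\alpha-1}$ landing on the target rate $h^{1+\alpha-2s}$ that makes the correction harmless.
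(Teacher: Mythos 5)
Your proposal is correct and follows essentially the same route as the paper: triangle inequality against Theorem \ref{onetwodimendis}, then bounding the correction term as $\bar{\omega}_{0}\,(-\Delta)_{h}u_{i}$ with $|\bar{\omega}_{0}|\leq Ch^{2-2s}$ and $|(-\Delta)_{h}u_{i}|\leq Ch^{\alpha-1}\|u\|_{C^{1,\alpha}(\bar{\Omega}^{\delta}_{1})}$. You in fact supply more detail than the paper does (the explicit weight-difference identity and the mean-value/H\"older argument for the discrete Laplacian), so nothing further is needed.
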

\begin{proof}
	For fixed $i$, by triangle inequality and Theorem \ref{onetwodimendis}, we obtain
	\begin{equation*}
		\begin{aligned}
			|((-\Delta)^{s}-(-\Delta)^{s}_{h,M})u_{i}|\leq& |((-\Delta)^{s}-(-\Delta)^{s}_{h})u_{i}|+|((-\Delta)^{s}_{h}-(-\Delta)^{s}_{h,M})u_{i}|\\
			\leq& Ch^{1+\alpha-2s}+\vartheta
		\end{aligned}
	\end{equation*}
	As for $\vartheta$, if $\bar{\omega}^{M}_{0}=\bar{\omega}_{0}$, there is
	$\vartheta=0$. Otherwise, we have
	\begin{equation*}
		\begin{aligned}
			\vartheta\leq& \left |(-\Delta)_{h}u_{i}\int_{-h}^{h}|y|^{1-2s}\bar{\phi_{1}}(y)dy\right |
			\leq Ch^{1+\alpha-2s}\|u\|_{C^{1,\alpha}(\bar{\Omega}^{\delta}_{1})},
		\end{aligned}
	\end{equation*}
	which leads to the desired results.
\end{proof}
Thus  we can get the following convergence results for one-dimensional case by Theorem \ref{thmcorstd}.
\begin{theorem}\label{thm1dcon}
	Assume $s\in(0,\frac{1}{2})\cup(\frac{1}{2},1)$. Let $u$ and $\mathbf{U}_{h}$ be solutions of Eqs. \eqref{eqtosol} and \eqref{eqmatrixform} with $b_{i}=w^{M}_{i}$ and
	\begin{equation*}
		\mathbf{U}=\{u(x_{i})\}_{i=1}^{N-1},\quad\mathbf{G}=\left \{(-\Delta)_{h}R_{i}\right \}_{i=1}^{N-1},\quad \mathbf{F}=\{f(x_{i})\}_{i=1}^{N-1}.%-\frac{R_{i-1}+R_{i+1}-2R_{i}}{h^{2}}
	\end{equation*}
	If $u\in C^{1,\alpha}(\bar{\Omega}^{\delta}_{1})$ with some fixed constant $\delta>4h>0$ and $\alpha\in(\max(0,2s-1),1]$, then we have
	
	\begin{equation*}
		\begin{aligned}
			&\|\mathbf{U}-\mathbf{U}_{h}\|_{\infty}<Ch^{1+\alpha-2s},\quad\|\mathbf{U}-\mathbf{U}_{h}\|_{2}<Ch^{1+\alpha-2s}.
		\end{aligned}
	\end{equation*}
	
\end{theorem}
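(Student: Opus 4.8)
The plan is to read Theorem~\ref{thm1dcon} as a direct application of the abstract convergence result Theorem~\ref{thmcorstd}, taking $b_i=w^M_i$ and $k=1+\alpha-2s$; the whole argument then reduces to verifying the three hypotheses of that theorem for the modified one-dimensional scheme \eqref{modifiedscheme1}. I would begin with hypothesis (1). Because $u$ solves \eqref{eqtosol}, $f(x_i)=(-\Delta)^s u(x_i)$ at every interior node, so $\mathbf{F}=\{(-\Delta)^s u(x_i)\}$; on the other hand, with $b_i=w^M_i$ and $\mathbf{G}=\{(-\Delta)_h R_i\}$ the vector $\mathbf{B}_1\mathbf{U}+\mathbf{G}$ reproduces exactly $\{(-\Delta)^s_{h,M}u(x_i)\}$ evaluated at the exact solution. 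Hence $\mathbf{F}-(\mathbf{B}_1\mathbf{U}+\mathbf{G})$ is precisely the truncation-error vector $\{((-\Delta)^s-(-\Delta)^s_{h,M})u(x_i)\}$, and Theorem~\ref{thmfra1dmd} supplies both the $\|\cdot\|_\infty$ and $\|\cdot\|_2$ bounds of order $h^{1+\alpha-2s}$, which is hypothesis (1).

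Hypothesis (2) is the sign pattern of the weights. For $|i|\ge 2$ the correction \eqref{eqdefw0M} leaves $w^M_i=w_i$, so $w^M_i<0$ by Theorem~\ref{thmfraweightprop1d}; the case $|i|=1$ is the inequality $w^M_1<0$ already recorded after \eqref{eqdefw0M}; and $w^M_0>0$ follows from $w^M_0=-(\bar{\omega}_{-1}-2\bar{\omega}^M_0+\bar{\omega}_1)/h^2=2(\bar{\omega}^M_0-\bar{\omega}_1)/h^2$ together with the definition of $\bar{\omega}^M_0$ and the monotonicity of $\bar{\omega}_k$ in $|k|$ (in both active and inactive cases of \eqref{eqdefw0M} one gets $\bar{\omega}^M_0>\bar{\omega}_1$).

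The substantive step is hypothesis (3), the lower bound on the partial row sums $S_i:=\sum_{j=1}^{N-1}w^M_{|i-j|}$, uniform in both $i$ and $h$. Here I would exploit that each $w^M_m=(-\Delta)_h\bar{\omega}^M_m$ is a discrete second difference of the even sequence $\bar{\omega}^M$: summing over the window $m\in\{1-i,\dots,N-1-i\}$ telescopes to two first-difference boundary terms, yielding the clean identity $S_i=g^M(i)+g^M(N-i)$ with $g^M(k):=(\bar{\omega}^M_{k-1}-\bar{\omega}^M_k)/h^2$ (the even extension makes this valid for every $1\le i\le N-1$, including the near-boundary rows). Since $g^M(k)-g^M(k+1)=-w^M_k$, hypothesis (2) forces $g^M$ to be positive and strictly decreasing on $k\ge 1$; therefore $S_i\ge 2g^M(N-1)\ge 2g^M(N)$ for every $i$. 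Finally $2g^M(N)=2(\bar{\omega}_{N-1}-\bar{\omega}_N)/h^2$ is exactly the full weight sum estimated in Theorem~\ref{thmfraweightprop1d}, hence bounded below by $CL^{-2s}=:C_0>0$. With all three hypotheses verified, Theorem~\ref{thmcorstd} yields $\|\mathbf{U}-\mathbf{U}_h\|_\infty$ and $\|\mathbf{U}-\mathbf{U}_h\|_2$ both of order $h^{1+\alpha-2s}$. The main obstacle is precisely this third step: spotting the telescoping that collapses the partial sum to $g^M(i)+g^M(N-i)$ and then using the monotonicity coming from the sign condition to reduce the uniform bound to the single far-field quantity already controlled in Theorem~\ref{thmfraweightprop1d}.
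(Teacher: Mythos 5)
Your proposal is correct and follows exactly the route the paper intends: the paper states Theorem~\ref{thm1dcon} with no written proof beyond the remark that it follows from Theorem~\ref{thmcorstd}, with condition (1) supplied by Theorem~\ref{thmfra1dmd}, condition (2) by the sign properties of $w^M_i$ established around \eqref{eqdefw0M}, and condition (3) by Theorem~\ref{thmfraweightprop1d}. Your telescoping identity $S_i=g^M(i)+g^M(N-i)$ is a valid (and more explicit) way to reduce the uniform partial-row-sum bound to the quantity $2(\bar{\omega}_{N-1}-\bar{\omega}_N)/h^2$ controlled there; equivalently one can note that each partial row sum contains the index $0$ and omits only negative weights, so it dominates the full sum.
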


\subsubsection{ Two-dimensional case}
According to Theorem \ref{thmcorstd2}, to obtain an effective numerical scheme, we need to make $w_{i,j}$ satisfy the following requirements
\begin{equation*}
	w_{0,0}^{M}>0 ~~{\rm and}~~w_{i,j}^{M}<0,
\end{equation*}
where $(i,j)\in\{(\pm 1,0),(0,\pm 1),(\pm 1,\pm 1)\}$.

To be specific, we modify the $\bar{\omega}_{i,j}$ as
\begin{equation*}
	\begin{aligned}
		&\bar{\omega}^{M}_{0,0}=\bar{\omega}_{0,0}+c_{0,0},\quad c_{0,0}\geq0;\quad \bar{\omega}^{M}_{i,j}=\bar{\omega}_{i,j},
	\end{aligned}
\end{equation*}
and take $w_{i,j}^{M}$ as
\begin{equation}\label{eqdeffra2dwmd}
	\begin{aligned}
		w_{i,j}^{M}=(\theta(-\Delta)_{h,1}+(1-\theta)(-\Delta)_{h,2})\bar{\omega}_{i,j}^{M}.
		%&-\theta\frac{\bar{\omega}_{i+1,j}^{M}+\bar{\omega}_{i-1,j}^{M}+\bar{\omega}_{i,j+1}^{M}+\bar{\omega}_{i,j-1}^{M}-4\bar{\omega}_{i,j}^{M}}{h^{2}}\\
		%&-(1-\theta)\frac{\bar{\omega}_{i+1,j+1}^{M}+\bar{\omega}_{i-1,j+1}^{M}+\bar{\omega}_{i-1,j-1}^{M}+\bar{\omega}_{i+1,j-1}^{M}-4\bar{\omega}_{i,j}^{M}}{2h^{2}}.
	\end{aligned}
\end{equation}
Thus the two-dimensional discretization scheme can be modified  as
\begin{equation}\label{eqdefFLH2Dmd}
	\begin{aligned}
		(-\Delta)^{s}_{h,M}u_{i,j}=&\sum_{i=1}^{N-1}\sum_{j=1}^{N-1}w
		_{p-i,q-j}^{M}u_{i,j}+(\theta(-\Delta)_{h,1}+(1-\theta)(-\Delta)_{h,2})R_{i,j}.
%		&-\theta\frac{R_{i-1,j}+R_{i+1,j}+R_{i,j-1}+R_{i,j+1}-4R_{i,j}}{h^{2}}\\
%		&-(1-\theta)\frac{R_{i-1,j-1}+R_{i+1,j-1}+R_{i-1,j+1}+R_{i+1,j+1}-4R_{i,j}}{2h^{2}}.
	\end{aligned}	
\end{equation}

Similar to the proofs of Theorems \ref{thmfraweightprop1d} and \ref{thmfra1dmd}, there hold
\begin{theorem}\label{thmfraweightprop2d}
	Let $w_{i,j}^{M}$ be defined in \eqref{eqdeffra2dwmd}. Then %And	$w_{i,j}^{M}$ has following properties
	\begin{equation*}
		\begin{aligned}
			&\sum_{i=-N+1}^{N-1}\sum_{j=-N+1}^{N-1}w_{i,j}^{M}\geq CL^{-2s}.
		\end{aligned}
	\end{equation*}
\end{theorem}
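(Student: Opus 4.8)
The plan is to mimic the one-dimensional argument of Theorem~\ref{thmfraweightprop1d}: summing a discrete Laplacian over the whole index box telescopes to a pure boundary sum, and the correction made at $(0,0)$ never reaches that boundary. By \eqref{eqdeffra2dwmd} and linearity,
\begin{equation*}
\sum_{i=-N+1}^{N-1}\sum_{j=-N+1}^{N-1}w_{i,j}^{M}=\theta\sum_{i,j}(-\Delta)_{h,1}\bar{\omega}_{i,j}^{M}+(1-\theta)\sum_{i,j}(-\Delta)_{h,2}\bar{\omega}_{i,j}^{M},
\end{equation*}
so since $\theta\in[0,1]$ it suffices to bound each of the two sums from below by $CL^{-2s}$. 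For the five-point part I would split $(-\Delta)_{h,1}\bar{\omega}_{i,j}^{M}$ into the two one-dimensional second differences in $i$ and in $j$; the double sum separates and each inner sum telescopes exactly as in Theorem~\ref{thmfraweightprop1d}, leaving only terms carrying an index $N-1$ or $N$. Because $\bar{\omega}^{M}$ differs from $\bar{\omega}$ only at $(0,0)$ and $N>2$, the correction $c_{0,0}$ drops out, so $\sum_{i,j}(-\Delta)_{h,1}\bar{\omega}_{i,j}^{M}=\sum_{i,j}(-\Delta)_{h,1}\bar{\omega}_{i,j}$.

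Next I would invoke the symmetry $\bar{\omega}_{m,n}=\bar{\omega}_{\pm m,\pm n}$, which follows from the evenness of $\bar{\phi}_{2}$ and the radial kernel in \eqref{eqdefomgk2D}, to collapse the four boundary strips into
\begin{equation*}
\sum_{i,j}(-\Delta)_{h,1}\bar{\omega}_{i,j}=\frac{4}{h^{2}}\sum_{j=-N+1}^{N-1}\left(\bar{\omega}_{N-1,j}-\bar{\omega}_{N,j}\right).
\end{equation*}
Since $c_{2,s-1}>0$ for $s-1<0$ and $r\mapsto r^{-2s}$ is decreasing, every summand is positive; the real task is to keep the right order. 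Applying the mean value theorem in the first variable to $|((N-1)h+\xi,jh+\eta)|^{-2s}-|(Nh+\xi,jh+\eta)|^{-2s}$ bounds the integrand below by $CLh\,((2L)^{2}+(jh)^{2})^{-s-1}$, whence $\bar{\omega}_{N-1,j}-\bar{\omega}_{N,j}\geq CLh^{3}((2L)^{2}+(jh)^{2})^{-s-1}$ after integrating $\bar{\phi}_{2}$. Comparing the resulting Riemann sum with $\int_{-2L}^{2L}((2L)^{2}+y^{2})^{-s-1}dy\asymp L^{-2s-1}$ then gives $\sum_{i,j}(-\Delta)_{h,1}\bar{\omega}_{i,j}\geq CL^{-2s}$.

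For the diagonal stencil $(-\Delta)_{h,2}$ the same strategy applies once its symbol is written as the sum of two second differences along the diagonal directions $i\pm j=\mathrm{const}$; the interior terms cancel, only indices near the edges of the box survive, and the identical distance estimate produces the same $L^{-2s}$ lower bound. Combining the two bounds with the convex weights $\theta$ and $1-\theta$ completes the argument. The step I expect to be most delicate is the diagonal telescoping: the diagonals have unequal lengths with endpoints at the corners of the box, so identifying precisely which $\bar{\omega}$-indices remain (and re-confirming that the $(0,0)$ correction still cancels) is more involved than in the five-point case, while the sum-to-integral comparison that upgrades mere positivity to the sharp $L^{-2s}$ rate is the other point requiring care.
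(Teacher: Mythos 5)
Your proposal is correct and follows essentially the route the paper intends: the paper gives no separate argument for Theorem~\ref{thmfraweightprop2d}, stating only that it is ``similar to'' Theorem~\ref{thmfraweightprop1d}, and your proof is precisely the natural two-dimensional version of that 1D argument (telescoping the discrete Laplacian to boundary differences $\bar{\omega}_{N-1,j}-\bar{\omega}_{N,j}$, noting the $c_{0,0}$ correction cancels in the total sum, and upgrading positivity to the rate $CL^{-2s}$ via the mean value theorem and a Riemann-sum comparison). The only point deserving extra care, which you already flag, is the diagonal stencil, where a few corner differences need not be pointwise positive; their total contribution is of lower order, so the bound survives.
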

\begin{theorem}
	Let $\Omega_{2}^{\delta}=(-L-\delta,L+\delta)\times(-L-\delta,L+\delta)$ and $s\in(0,\frac{1}{2})\cup(\frac{1}{2},1)$. Suppose $(-\Delta)^{s}$ and $(-\Delta)^{s}_{h,M}$ are defined in \eqref{eqdeflap} and \eqref{eqdefFLH2Dmd}, respectively. If $u\in C^{1,\alpha}(\bar{\Omega}_{2}^{\delta})$ with some fixed constant $\delta>4h>0$ and $\alpha\in(\max(0,2s-1),1]$, then we have
	\begin{equation*}
		\|((-\Delta)^{s}-(-\Delta)^{s}_{h,M})u\|_{\infty}\leq Ch^{1+\alpha-2s},\quad \|((-\Delta)^{s}-(-\Delta)^{s}_{h,M})u\|_{2}\leq Ch^{1+\alpha-2s}.
	\end{equation*}
\end{theorem}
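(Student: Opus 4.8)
The plan is to transcribe the one-dimensional argument of Theorem~\ref{thmfra1dmd} to two dimensions, reducing the modified truncation error to the unmodified one (already controlled by Theorem~\ref{onetwodimendis} in two dimensions) plus a correction term coming solely from the single altered weight $\bar{\omega}_{0,0}$. Fixing an interior index $(i,j)$, I would start from the triangle inequality
\begin{equation*}
|((-\Delta)^{s}-(-\Delta)^{s}_{h,M})u_{i,j}|\leq |((-\Delta)^{s}-(-\Delta)^{s}_{h})u_{i,j}|+\vartheta,\qquad \vartheta:=|((-\Delta)^{s}_{h}-(-\Delta)^{s}_{h,M})u_{i,j}|.
\end{equation*}
By Theorem~\ref{onetwodimendis} the first term is already at most $Ch^{1+\alpha-2s}$, so the whole estimate reduces to bounding $\vartheta$.

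The second step is to compute $\vartheta$ in closed form. Since the correction only replaces $\bar{\omega}_{0,0}$ by $\bar{\omega}^{M}_{0,0}=\bar{\omega}_{0,0}+c_{0,0}$ and leaves $R_{i,j}$ and every other weight untouched, the two discretizations of $(-\Delta)^{s-1}$ differ at every interior grid point by exactly the surviving diagonal term of the convolution, namely $c_{0,0}u_{i,j}$. Applying the discrete Laplacian $\theta(-\Delta)_{h,1}+(1-\theta)(-\Delta)_{h,2}$ (the operator from which $(-\Delta)^{s}_{h,M}$ is built in \eqref{eqdefFLH2Dmd}, whose stencil is symmetric) then yields
\begin{equation*}
\vartheta=c_{0,0}\left|\left(\theta(-\Delta)_{h,1}+(1-\theta)(-\Delta)_{h,2}\right)u_{i,j}\right|,
\end{equation*}
which is the precise two-dimensional analogue of the one-dimensional identity $\vartheta=\left|(-\Delta)_{h}u_{i}\int_{-h}^{h}|y|^{1-2s}\bar{\phi}_{1}(y)dy\right|$ used in Theorem~\ref{thmfra1dmd}, with $c_{0,0}$ playing the role of $\bar{\omega}_{0}$.

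It then remains to estimate the two factors separately. The discrete-Laplacian factor is handled by Taylor expansion: for $u\in C^{1,\alpha}(\bar{\Omega}^{\delta}_{2})$ the first-order terms in the stencils of $(-\Delta)_{h,1}$ and $(-\Delta)_{h,2}$ cancel by symmetry, and the $C^{1,\alpha}$-remainders leave $|(\theta(-\Delta)_{h,1}+(1-\theta)(-\Delta)_{h,2})u_{i,j}|\leq Ch^{\alpha-1}\|u\|_{C^{1,\alpha}(\bar{\Omega}^{\delta}_{2})}$. For the prefactor I would show $c_{0,0}=O(h^{2-2s})$. Combining the two bounds gives $\vartheta\leq Ch^{2-2s}\cdot h^{\alpha-1}=Ch^{1+\alpha-2s}$, and together with the first term this proves the $\|\cdot\|_{\infty}$ estimate; the $\|\cdot\|_{2}$ estimate follows since the pointwise bound is uniform in $(i,j)$ and the discrete $l^{2}$ norm over the fixed domain $\Omega_{2}$ is dominated by the $l^{\infty}$ norm up to a constant.

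The main obstacle is the bound $c_{0,0}=O(h^{2-2s})$. In one dimension the correction simply set $\bar{\omega}_{0}^{M}=0$, so the relevant quantity was the closed form $\bar{\omega}_{0}=c_{1,s-1}\int_{-h}^{h}|y|^{1-2s}\bar{\phi}_{1}(y)dy=O(h^{2-2s})$ and the bound was immediate. In two dimensions $c_{0,0}$ is instead chosen to restore the sign conditions of Theorem~\ref{thmcorstd2}, i.e.\ to push the neighbour weights $w_{\pm1,0},w_{0,\pm1},w_{\pm1,\pm1}$ negative; since the correction adds a fixed multiple of $c_{0,0}/h^{2}$ to these weights, I must verify that their offending positive parts are $O(h^{-2s})$, equivalently that $\bar{\omega}_{i,j}=O(h^{2-2s})$ near the origin (from the rescaling $\xi=h\xi',\eta=h\eta'$ in \eqref{eqdefomgk2D}), so that the minimal admissible $c_{0,0}$ indeed has order $h^{2-2s}$. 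This quantitative two-dimensional weight analysis---the analogue of the explicit one-dimensional computation \eqref{weight1 11}---is the only non-routine ingredient; everything else is a direct transcription of the one-dimensional proof.
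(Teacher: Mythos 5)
Your proposal is correct and follows essentially the same route the paper intends: the paper gives no separate proof of this theorem, asserting only that it is ``similar to the proofs of Theorems \ref{thmfraweightprop1d} and \ref{thmfra1dmd}'', and that one-dimensional argument is exactly your triangle inequality against the unmodified scheme plus the identification of the correction term as $c_{0,0}$ times the symmetric discrete Laplacian of $u$, bounded by the product of an $O(h^{2-2s})$ prefactor and an $O(h^{\alpha-1})$ stencil estimate. The one place you go beyond the paper --- requiring $c_{0,0}=O(h^{2-2s})$, which is both necessary for the stated rate under $C^{1,\alpha}$ regularity and consistent with the minimal correction needed to enforce the sign conditions of Theorem \ref{thmcorstd2} --- is a genuine implicit hypothesis that the printed statement (arbitrary $c_{0,0}\geq 0$) omits, so flagging and verifying it is a strength of your write-up rather than a deviation from the paper's approach.
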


Thus the corresponding convergence results can be obtained by Theorem \ref{thmcorstd2}.

\begin{theorem}\label{thmcon2}
	Let $u$ and $\mathbf{U}_{h}$ be solutions of \eqref{eqtosol} and \eqref{eqmatrixform2} with $t_{i,j}=w_{i,j}^{M}$ and
	\begin{equation*}
		\begin{aligned}
			&\mathbf{U}=\{u(x_{i},y_{j})\}_{i,j=1}^{N-1},\quad \mathbf{F}=\{f(x_{i},y_{j})\}_{i,j=1}^{N-1},\\
			&\mathbf{G}=\left \{(\theta(-\Delta)_{h,1}+(1-\theta)(-\Delta)_{h,2})R_{i,j}\right \}_{i,j=1}^{N-1}.
		\end{aligned}
	\end{equation*}
	After choosing suitable $\theta$ and $c_{0,0}$, we have if $u\in C^{1,\alpha}(\bar{\Omega}^{\delta}_{2})$ with some fixed constant $\delta>4h>0$ and $\alpha\in(\max(0,2s-1),1]$,
	
	\begin{equation*}
		\begin{aligned}
			&\|\mathbf{U}-\mathbf{U}_{h}\|_{\infty}<Ch^{1+\alpha-2s},\quad\|\mathbf{U}-\mathbf{U}_{h}\|_{2}<Ch^{1+\alpha-2s},
		\end{aligned}
	\end{equation*}
where $s\in(\frac{1}{250},\frac{1}{2})\cup (\frac{1}{2},1)$.
\end{theorem}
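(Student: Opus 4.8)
The plan is to realize Theorem~\ref{thmcon2} as a direct application of the two-dimensional convergence criterion Theorem~\ref{thmcorstd2}, taking $\mathbf{B}_2$ to be the block-Toeplitz--Toeplitz-block matrix with symbol $t_{i,j}=w^M_{i,j}$ and $k=1+\alpha-2s$. Once the three hypotheses of Theorem~\ref{thmcorstd2} are checked, the two error bounds follow verbatim. Those hypotheses are the consistency (truncation) estimate, the sign pattern of the weights, and the uniform positivity of the truncated row sums; I would establish them in that order.

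For the consistency estimate I would invoke the truncation-error theorem for the modified scheme $(-\Delta)^s_{h,M}$ stated just above. Since $u$ solves \eqref{eqtosol}, we have $f(x_i,y_j)=(-\Delta)^s u(x_i,y_j)$, while by \eqref{eqdefFLH2Dmd} the quantity $(\mathbf{B}_2\mathbf{U}+\mathbf{G})_{i,j}$ is exactly $(-\Delta)^s_{h,M}u_{i,j}$; hence $\mathbf{F}-(\mathbf{B}_2\mathbf{U}+\mathbf{G})=\{((-\Delta)^s-(-\Delta)^s_{h,M})u_{i,j}\}$ and the required $\|\cdot\|_\infty,\|\cdot\|_2\le Ch^{1+\alpha-2s}$ bounds are immediate, provided the correction is admissible in size. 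Tracing the proof of that truncation theorem (the analogue of Theorem~\ref{thmfra1dmd}) shows the extra error produced by replacing $\bar\omega_{0,0}$ with $\bar\omega_{0,0}+c_{0,0}$ equals $c_{0,0}(\theta(-\Delta)_{h,1}+(1-\theta)(-\Delta)_{h,2})u_{i,j}$, which for $u\in C^{1,\alpha}$ is $O(c_{0,0}h^{\alpha-1})$; choosing $c_{0,0}=O(h^{2-2s})$ keeps this at the target order $h^{1+\alpha-2s}$. This fixes the admissible scaling of $c_{0,0}$.

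The second hypothesis asks for $w^M_{0,0}>0$ and $w^M_{i,j}<0$ for every $(i,j)\ne(0,0)$, and this is the step where the choice of $\theta$ and $c_{0,0}$ --- and the restriction $s\in(\tfrac{1}{250},\tfrac12)\cup(\tfrac12,1)$ --- is used. Away from the origin the kernel $|(\xi,\eta)|^{-2s}$ is smooth, and the same convexity/sign argument driving the proofs of Theorems~\ref{thmfraweightprop1d} and~\ref{thmfraweightprop2d} shows that all the ``far'' weights $\theta w^{(1)}_{i,j}+(1-\theta)w^{(2)}_{i,j}$ are already negative, so only the center and the eight nearest neighbors $(\pm1,0),(0,\pm1),(\pm1,\pm1)$ are at stake. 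Because $\bar\omega^M$ differs from $\bar\omega$ only at the origin, raising $c_{0,0}$ by $\delta c$ changes the center weight by $+(4\theta+2(1-\theta))\delta c/h^2>0$, each axis-neighbor weight by $-\theta\,\delta c/h^2$, and each diagonal-neighbor weight by $-(1-\theta)\,\delta c/(2h^2)$. Hence, after scaling out the common factor $h^{-2s}$ so that the weights become $h$-independent functions of $s$, fixing any $\theta\in(0,1)$ and increasing $\tilde c_{0,0}=c_{0,0}h^{2s-2}$ simultaneously lifts the rescaled center weight and pushes all eight neighbor weights down. The remaining task is purely quantitative: verify that the rescaled weights $w^{(1)}_{i,j},w^{(2)}_{i,j}$, computed from \eqref{eqdefomgk2D} and the two discrete Laplacians, admit a common $(\theta,\tilde c_{0,0})\in(0,1)\times[0,\infty)$ making $w^M_{0,0}>0$ and all eight neighbors negative; this sign system turns out to be solvable exactly down to $s\approx\tfrac{1}{250}$, which is the origin of the stated cutoff.

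With the sign pattern in hand, the third hypothesis is cheap. Theorem~\ref{thmfraweightprop2d} gives $S:=\sum_{i,j=-N+1}^{N-1}w^M_{i,j}\ge CL^{-2s}$. For fixed $p,q\in\{1,\dots,N-1\}$ the truncated row sum $\sum_{i,j=1}^{N-1}w^M_{|p-i|,|q-j|}$ runs over a sub-window of offsets that always contains $(0,0)$, so every offset it omits is off-diagonal and therefore carries a negative weight; consequently the row sum exceeds $S\ge CL^{-2s}>0$, and we may take $C_0=CL^{-2s}$. All hypotheses of Theorem~\ref{thmcorstd2} now hold with $k=1+\alpha-2s$, and its conclusion is precisely the desired pair of estimates. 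The main obstacle is the feasibility analysis of the previous paragraph: establishing, uniformly over $s\in(\tfrac{1}{250},\tfrac12)\cup(\tfrac12,1)$, that a single admissible pair $(\theta,c_{0,0})$ reconciles the conflicting sign demands of the five-point and cross stencils at the nearest neighbors while keeping the correction within the $O(h^{2-2s})$ budget dictated by consistency.
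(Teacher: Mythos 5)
Your proposal is correct and follows essentially the same route as the paper, which gives no written proof of Theorem~\ref{thmcon2} beyond citing Theorem~\ref{thmcorstd2} together with the truncation and weight-sum results, and which likewise defers the feasibility of the sign conditions on $(\theta,c_{0,0})$ to numerical verification (the shaded regions in Figure~\ref{figctheta}). You in fact supply two details the paper leaves implicit --- that $c_{0,0}$ must scale like $h^{2-2s}$ for the consistency bound to survive the correction, and that the truncated row sums dominate the full sum of Theorem~\ref{thmfraweightprop2d} because every omitted offset carries a negative weight --- so no gap beyond the one both you and the authors acknowledge.
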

\begin{remark}
	By numerical experiments, we give the range of $\theta$ with different $s\in(\frac{1}{250},\frac{1}{2})\cup (\frac{1}{2},1)$ and $c_{0,0}$ in Figure \ref{figctheta} (shown in the shaded area), which makes above estimates hold. But for smaller $s$, we do not find a suitable $\theta$ to make $w^{M}_{i,j}$  satisfy Theorem \ref{thmcorstd2}.
	\begin{figure}
		\centering
		\subfigure[$c_{0,0}=1$]{
		\includegraphics[width=0.45\linewidth,angle=0]{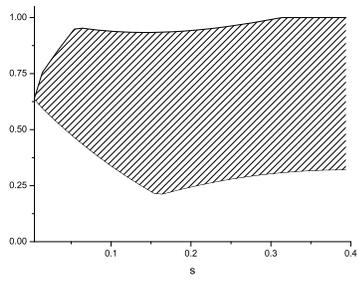}\label{fig:c1}}
		\subfigure[$c_{0,0}=3$]{
		\includegraphics[width=0.45\linewidth,angle=0]{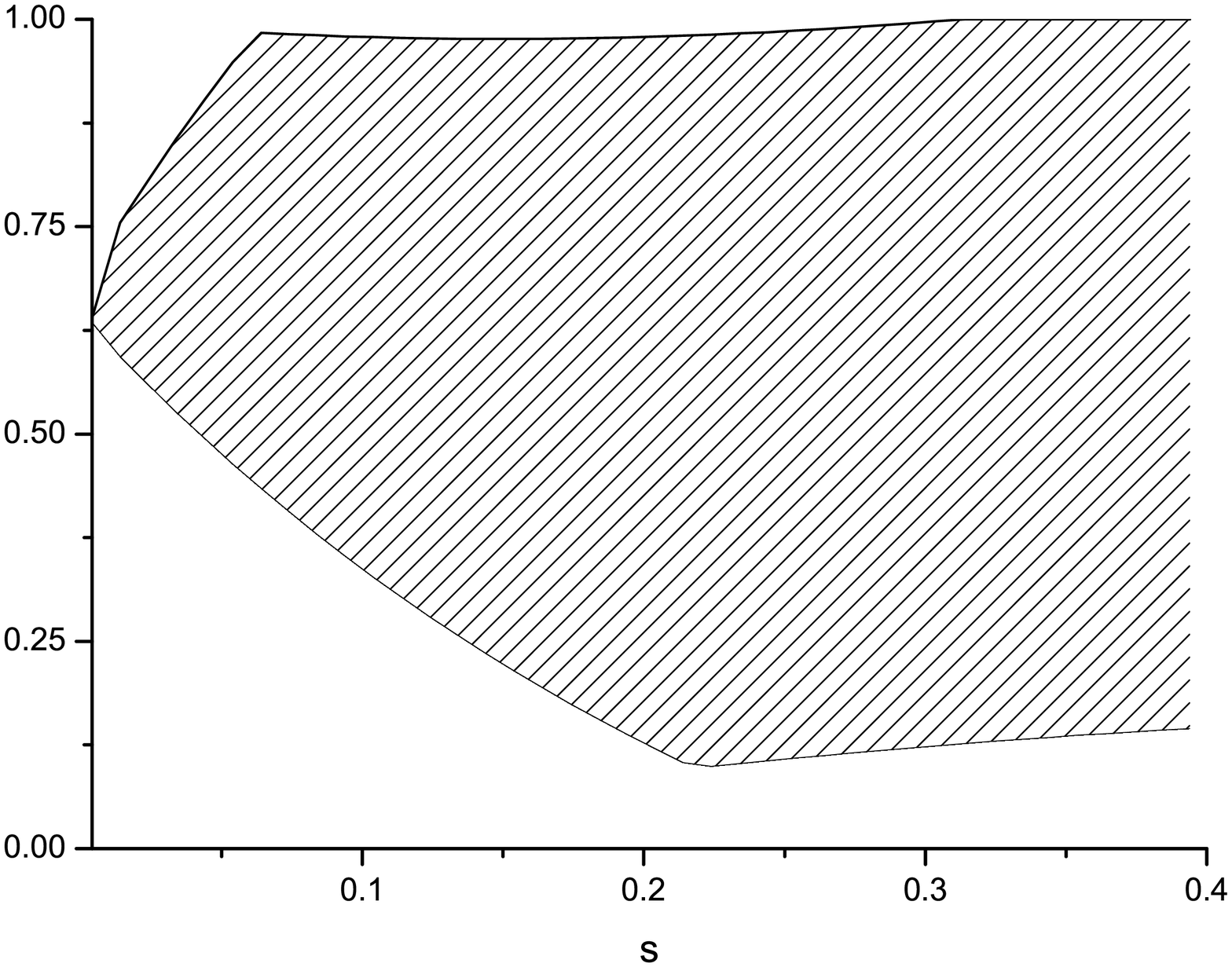}\label{fig:c3}}
	\subfigure[$c_{0,0}=7$]{
		\includegraphics[width=0.45\linewidth,angle=0]{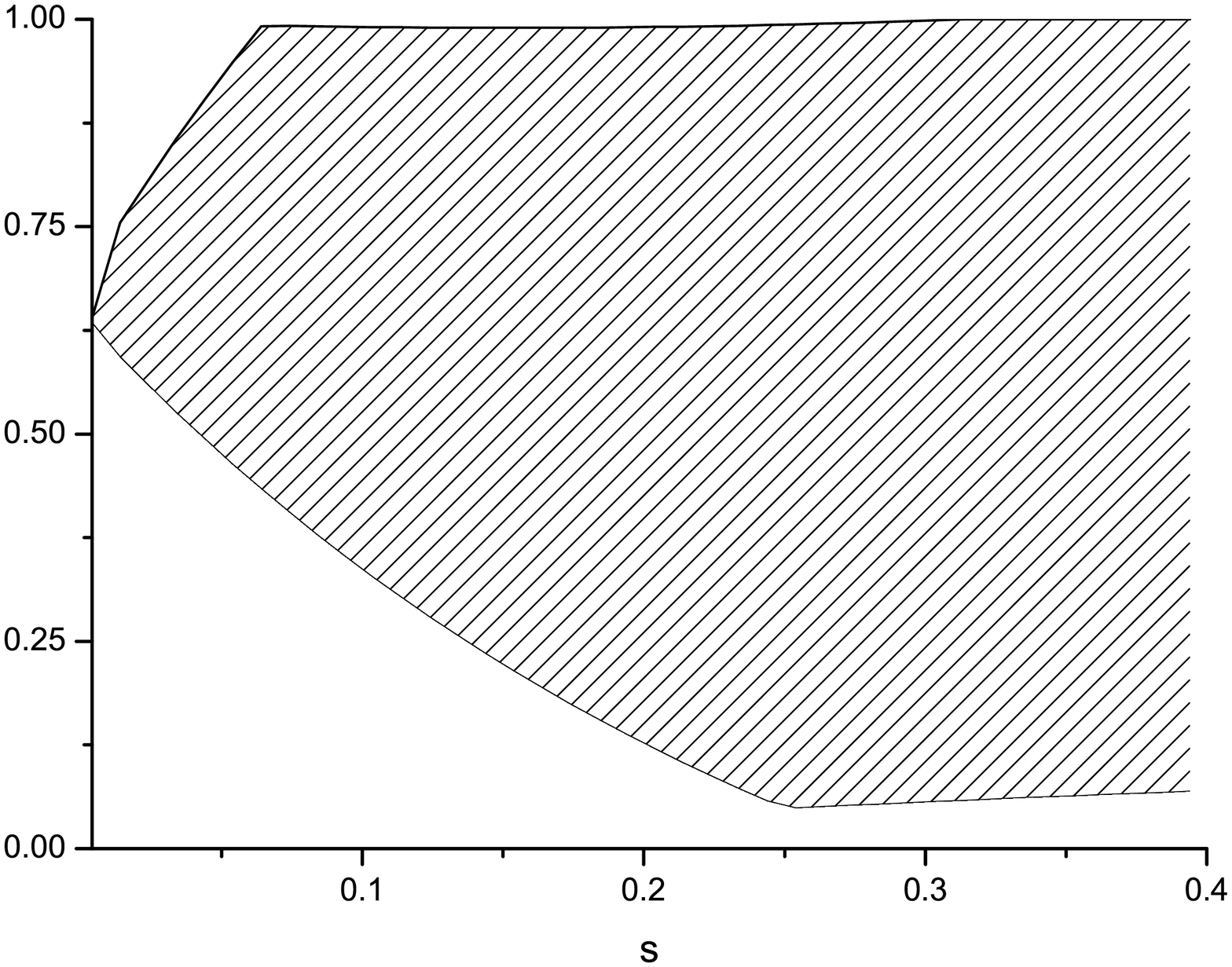}\label{fig:c7}}
		\subfigure[$c_{0,0}=16$]{
		\includegraphics[width=0.45\linewidth,angle=0]{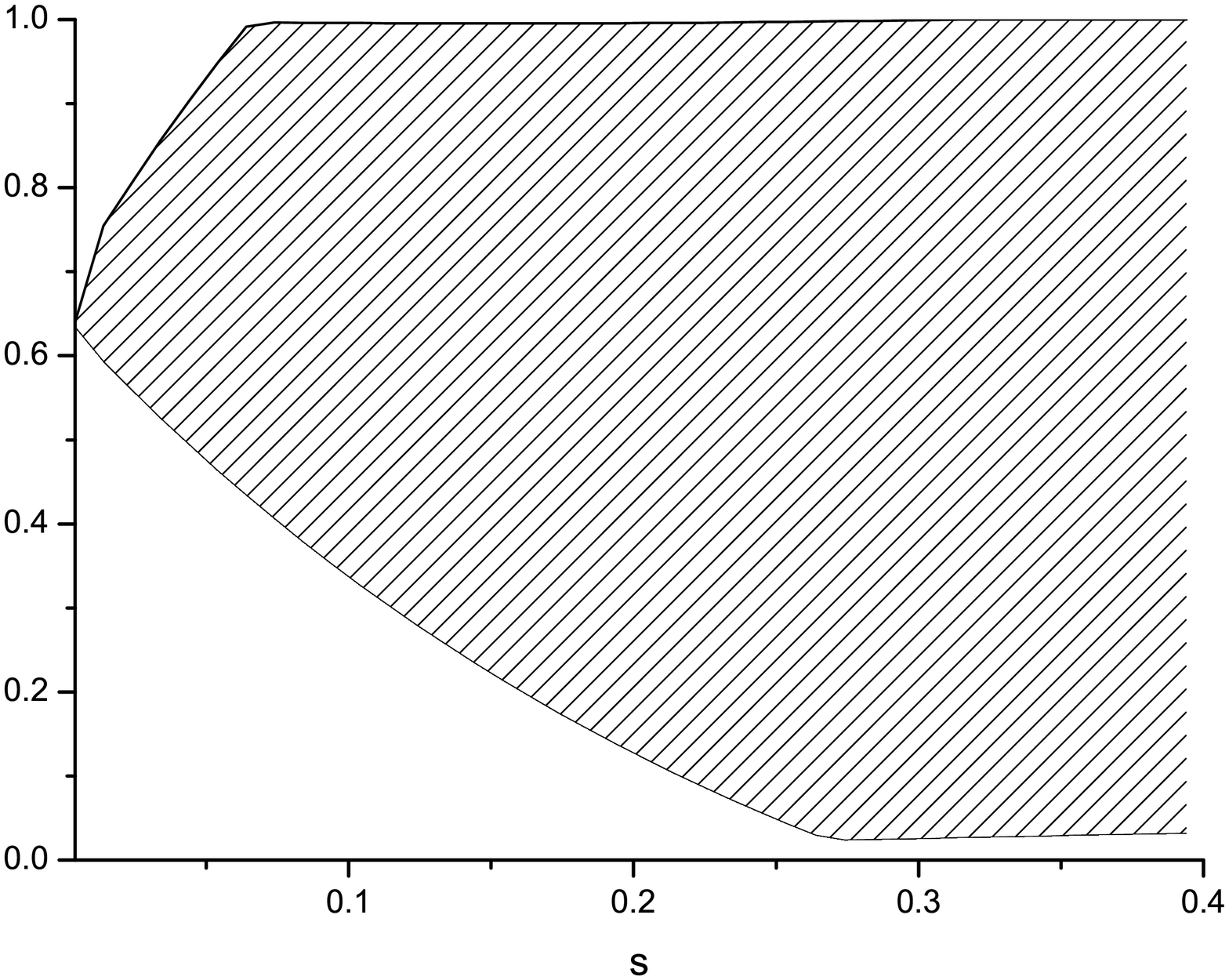}\label{fig:c16}}
		\caption{Range of $\theta$ for different $s$ and $c_{0,0}$.}
		\label{figctheta}
	\end{figure}
	
\end{remark}

\begin{remark}
It is easy to check that the coefficient $c_{n,s}$ in \eqref{eqdeflap} can tend to $\infty$ when $s= \frac{1}{2}$ in one-dimensional case, but it doesn't for the two-dimensional case. %fit to the two-dimensional case.
\end{remark}
\section{Numerical experiments}
In this section, we first verify the convergence of the numerical method in discretizing $(-\Delta)^s$ and solving Eq. \eqref{eqtosol}. Then we simulate the mean exit time of L\'{e}vy motion with generator $\mathcal{A}=\nabla P(x)\cdot\nabla+ (-\Delta)^{s}$.
From \cite{dyda2012}, we have
\begin{equation}\label{eqexactsol1d}
	u=\left\{
	\begin{aligned}
		(1-x^{2})^{P+s},&\quad x\in(-1,1),\\
		0,&\quad otherwise
	\end{aligned}\right.
\end{equation}
with $P\in \mathbb{R}$ and
\begin{equation*}
	(-\Delta)^{s}u=\frac{2^{2s}\Gamma(\frac{1}{2}+s)\Gamma(P+1+s)}{\sqrt{\pi}\Gamma(P+1)}~_{2}F_{1}\left (\frac{1}{2}+s,-P;\frac{1}{2};x^{2}\right ),\quad x\in(-1,1),
\end{equation*}
with $~_{2}F_{1}$ being the Gauss hypergeometric function. Using this result, we test the truncation errors and the convergence rates (the right hand side and boundary terms of Eq. \eqref{eqtosol} are taken as the corresponding expressions).
\begin{example}
In this example, we consider the truncation error in one-dimensional case. Here we choose $\Omega_{1}=(-1,1)$, $g(x)=0$, and $P=2-s$ in \eqref{eqexactsol1d}.  All the results presented in Table \ref{tab:1dtr} agree with Theorem \ref{onetwodimendis}.%The $(-\Delta)^{s}_{h}u$ with $h=\frac{1}{1024}$ can taken to be the reference solution.

\begin{table}[htbp]
	\centering
	\caption{$l^{\infty}(\Omega)$ truncation errors and convergence rates with $P=2-s$}
	\begin{tabular}{ccccc}
		\hline
		$s\backslash 2/h$ & 128&256 & 512 & 1024 \\
		\hline
		0.2 & 2.313E-03&8.321E-04 & 2.930E-04 & 1.016E-04 \\
		& Rates&1.4749 & 1.5061 & 1.5274 \\
		0.4 & 4.168E-03&1.723E-03 & 7.215E-04 & 3.057E-04 \\
		&Rates&1.2742 & 1.2559 & 1.2388 \\
		0.6 & 1.776E-02&9.859E-03 & 5.541E-03 & 3.142E-03 \\
		& Rates&0.8495 & 0.8314 & 0.8185 \\
		0.8 & 6.368E-02&4.724E-02 & 3.536E-02 & 2.662E-02 \\
		& Rates&0.4309 & 0.4179 & 0.4098 \\
		\hline
	\end{tabular}
	\label{tab:1dtr}

\end{table}

\end{example}

\begin{example}
In this example, we use numerical scheme \eqref{eqmatrixform} to solve \eqref{eqtosol} with $g(x)=0$ and $\Omega_{1}=(-1,1)$. Here, we choose $P=1$ in \eqref{eqexactsol1d} which leads to $u\in C^{1,s}(\Omega_{1})$. The results presented in Table \ref{tab:1dP1D} show that the numerical scheme \eqref{eqmatrixform} has an $\mathcal{O}(h^{1+s})$ convergence rate which is higher than the one $(\mathcal{O}(h^{1-s}))$ predicted in Theorem \ref{thm1dcon}.

\begin{table}[htbp]
	\centering
	\caption{$l^{\infty}(\Omega)$ errors and convergence rates with $P=1$}
	\begin{tabular}{cccccc}
		\hline
		$s\backslash 2/h$& 128&256 & 512 & 1024 \\
		\hline
		0.1 & 3.663E-04&1.911E-04 & 9.470E-05 & 4.570E-05 \\
		& Rates&0.9389 & 1.0128 & 1.0512 \\
		0.2 & 1.005E-03&4.965E-04 & 2.329E-04 & 1.061E-04 \\
		& Rates&1.0175 & 1.0922 & 1.1337 \\
		0.3 & 4.274E-04&1.859E-04 & 7.800E-05 & 3.219E-05 \\
		& Rates&1.2011 & 1.2529 & 1.2770 \\
		0.6 & 2.415E-04&7.109E-05 & 2.433E-05 & 8.171E-06 \\
		& Rates&1.7644 & 1.5470 & 1.5741 \\
		\hline
	\end{tabular}
	\label{tab:1dP1D}
\end{table}

\end{example}
\begin{example}
	We choose $P=0$ in \eqref{eqexactsol1d}. We first take $\Omega_{1}=(-0.5,0.5)$ and
	\begin{equation*}
		g(x)=\left\{
		\begin{aligned}
			(1-x^{2})^{P+s},&\quad x\in(-1,1)\backslash \Omega_{1},\\
			0,&\quad otherwise
		\end{aligned}\right.
	\end{equation*}
	 to verify the convergence when we use \eqref{eqmatrixform} to solve the inhomogeneous Dirichlet problem. According to Eq. \eqref{eqdefw0M}, we have $\bar{\omega}_{0}^{M}=0$ when $s=0.2$ and $\bar{\omega}_{0}^{M}=\bar{\omega}_{0}$ when $s=0.3,0.6,0.7$. From the results presented in Table \ref{tab:1dP0nD}, we find when $\bar{\omega}_{0}^{M}=0$, the convergence rates are $\mathcal{O}(h^{2-2s})$ which are the same as the ones predicted by Theorem \ref{thm1dcon} and when $\bar{\omega}_{0}^{M}\neq 0$, the convergence rates are $\mathcal{O}(h^{2})$ which are higher than the predicted ones.

\begin{table}[htbp]
	\centering
	\caption{$l^{\infty}(\Omega)$ errors and convergence rates with $P=0$}
	\begin{tabular}{cccccc}
		\hline
		$s\backslash 2/h$& 128&256 & 512 & 1024 \\
		\hline
		0.2 & 5.20E-05&1.74E-05 & 5.82E-06 & 1.95E-06 \\
		& Rates&1.5763 & 1.5834 & 1.5799 \\
		0.3 & 6.432E-06&1.622E-06 & 4.064E-07 & 1.007E-07 \\
		& Rates&1.9870 & 1.9973 & 2.0123 \\
		0.6 & 6.647E-06&1.749E-06 & 4.547E-07 & 1.167E-07 \\
		& Rates&1.9265 & 1.9434 & 1.9617 \\
		0.7 & 6.474E-06&1.718E-06 & 4.505E-07 & 1.166E-07 \\
		& Rates&1.9143 & 1.9307 & 1.9502 \\
		\hline
	\end{tabular}
	\label{tab:1dP0nD}
\end{table}

Afterwards, we show the numerical results that use \eqref{eqmatrixform} to solve \eqref{eqtosol} with $\Omega_{1}=(-1,1)$ and $g(x)=0$ in Table \ref{tab:1dP0D}. Due to $P=0$, the exact solution has a low regularity. The results presented in Table \ref{tab:1dP0D} show the numerical scheme \eqref{eqmatrixform} is still effective.
\begin{table}[htbp]
	\centering
	\caption{$l^{\infty}(\Omega)$ errors and convergence rates with $P=0$}
	\begin{tabular}{ccccc}
		\hline
		$s\backslash 2/h$& 256&512 & 1024 & 2048 \\
		\hline
		0.2 & 7.681E-02&6.680E-02 & 5.812E-02 & 5.058E-02 \\
		& Rates&0.2016 & 0.2008 & 0.2004 \\
		0.4 & 8.422E-03&6.387E-03 & 4.842E-03 & 3.670E-03 \\
		& Rates&0.3990 & 0.3995 & 0.3997 \\
		0.6 & 2.459E-03&1.621E-03 & 1.069E-03 & 7.053E-04 \\
		& Rates&0.6010 & 0.6005 & 0.6002 \\
		0.8 & 4.966E-04&2.859E-04 & 1.644E-04 & 9.450E-05 \\
		& Rates&0.7964 & 0.7982 & 0.7991 \\
		\hline
	\end{tabular}
	\label{tab:1dP0D}
\end{table}

\end{example}

\begin{example}
	Here we present some examples in two dimensions. We choose
	\begin{equation*}
		u=\left\{
		\begin{aligned}
			((1-x^{2})(1-y^{2}))^{2}, &\quad(x,y)\in \Omega_{2};\\
			0, ~&\quad(x,y) \in \Omega_{2}^{c},
		\end{aligned}\right.
	\end{equation*}
where $\Omega_{2}=(-1,1)\times(-1,1)$ and $g(x,y)=0$. Table \ref{tab:trun2d} shows the truncation errors when using \eqref{eqdefFLH2Dall} with $\theta=0~ {\rm and}~ 1$~to approximate $(-\Delta)^{s}u$. Since $(-\Delta)^{s}u$ is unknown, the truncation errors are calculated by
\begin{equation*}
	e_{h}=\|(-\Delta)^{s}_{h}u-(-\Delta)^{s}_{h/2}u\|_{\infty}.
\end{equation*}
  All the results validate Theorem \ref{onetwodimendis}.
	
	\begin{table}[htbp]
		\centering
		\caption{$l^{\infty}(\Omega)$ truncation errors and convergence rates in two dimensions}
		\begin{tabular}{ccccc}
			\hline
			$(s,\theta)\backslash 2/h$& 64 & 128&512 & 1024 \\
			\hline
			(0.3,0) & 1.238E-03 & 4.994E-04&1.968E-04 & 7.645E-05 \\
			& Rates & 1.3099&1.3437 & 1.3641 \\
			(0.3,1) & 1.324E-03 & 5.208E-04&2.021E-04 & 7.778E-05 \\
			& Rates & 1.3461&1.3656 & 1.3777 \\
			(0.8,0) & 1.358E-01 & 9.929E-02&7.399E-02 & 5.562E-02 \\
			& Rates & 0.4516&0.4244 & 0.4116 \\
			(0.8,1) & 1.332E-01 & 9.868E-02&7.384E-02 & 5.559E-02 \\
			& Rates & 0.4330&0.4183 & 0.4097 \\
			\hline
		\end{tabular}
		\label{tab:trun2d}
	\end{table}
	
In Table \ref{tab:con2d}, we show the convergence of the numerical scheme \eqref{eqmatrixform2}.  Since $(-\Delta)^{s}u$ is unknown, we use $(-\Delta)^{s}_{h}u$ with $h=\frac{1}{2048}$ and $\theta=1$ to approximately represent it. For $s=0.2,0.3$, we take $c_{0,0}=1$ and $\theta=0.5$; the convergence rates presented in Table \ref{tab:con2d} are the same as the ones predicted by Theorem \ref{thmcon2}. For $s=0.4,0.8$, we choose $c_{0,0}=0$ and $\theta=1$; the convergence rates are higher than the predicted ones.
	
	\begin{table}[htbp]
		\centering
		\caption{$l^{\infty}(\Omega)$ errors and convergence rates in two dimensions}
		\begin{tabular}{ccccc}
			\hline
			$s\backslash 2/h$& 64 & 128&256 & 512 \\
			\hline
			0.2 & 6.837E-03 & 2.371E-03&8.040E-04 & 2.654E-04 \\
			& 0 & 1.5281&1.5600 & 1.5993 \\
			0.3 & 7.525E-03 & 3.030E-03&1.179E-03 & 4.419E-04 \\
			& 0 & 1.3125&1.3618 & 1.4157 \\
			0.4 & 1.122E-03 & 2.826E-04&7.286E-05 & 1.834E-05 \\
			& 0 & 1.9886&1.9557 & 1.9901 \\
			0.8 & 1.222E-03 & 3.049E-04&7.550E-05 & 1.837E-05 \\
			& 0 & 2.0030&2.0138 & 2.0393 \\
			\hline
		\end{tabular}
		\label{tab:con2d}
	\end{table}
\end{example}
\begin{example}
	Finally, we use the discretization \eqref{eqdefFLH2Dmd} to simulate the mean exit time $u(\mathbf{x})$  of an orbit starting at $\mathbf{x}$, from a two-dimensional bounded interval $\Omega_{2}$. According to Dynkin formula \cite{Applebaum.2009Lpasc,Sato.1999Lpaidd} of Markov processes, $u(\mathbf{x})$ satisfies \cite{Deng.2017Metaepftapwttplwt,Naeh.1990ADAttEP},
\begin{equation*}
	\left\{
	\begin{aligned}
		\mathcal{A}u(\mathbf{x})&=1,\quad {\rm in}~\Omega_{2},\\
		u(\mathbf{x})&=0,\quad {\rm in}~\Omega_{2}^{c},
	\end{aligned}\right.
\end{equation*}
where
	\begin{equation*}
	\mathcal{A}=\nabla P(\mathbf{x})\cdot\nabla +(-\Delta)^{s},
\end{equation*}
$\nabla$ denotes gradient operator, and $P(x)$ is a given potential.
Here, we take $h=1/64$, $c_{0,0}=100$, $\theta=\frac{1}{2}$, $\Omega_{2}=((-1,1))^{2}$, and $P(\mathbf{x})=\kappa(x_{1}^{2}+x_{2}^{2})$ with $\mathbf{x}=(x_{1},x_{2})$. In Figure \ref{figmeanexitt}, we show the mean exit time when taking $s=0.2,~0.4,~0.6,~0.8$, and $\kappa=0.5$. Comparing Figure \ref{fig:t12} with Figures \ref{fig:t14}, \ref{fig:t16}, \ref{fig:t18},  we find the mean exit time becomes longer and boundary
layer phenomena become weaker as $s$ increases. In Figure \ref{figmeanexitt2}, we show the mean exit time with $s=0.6$ and different $\kappa$. We find that the boundary
layer phenomena become stronger and the mean exit time becomes longer as $\kappa$ increases.

\begin{figure}
	\centering
	\subfigure[$s=0.2$]{
		\includegraphics[width=0.45\linewidth,angle=0]{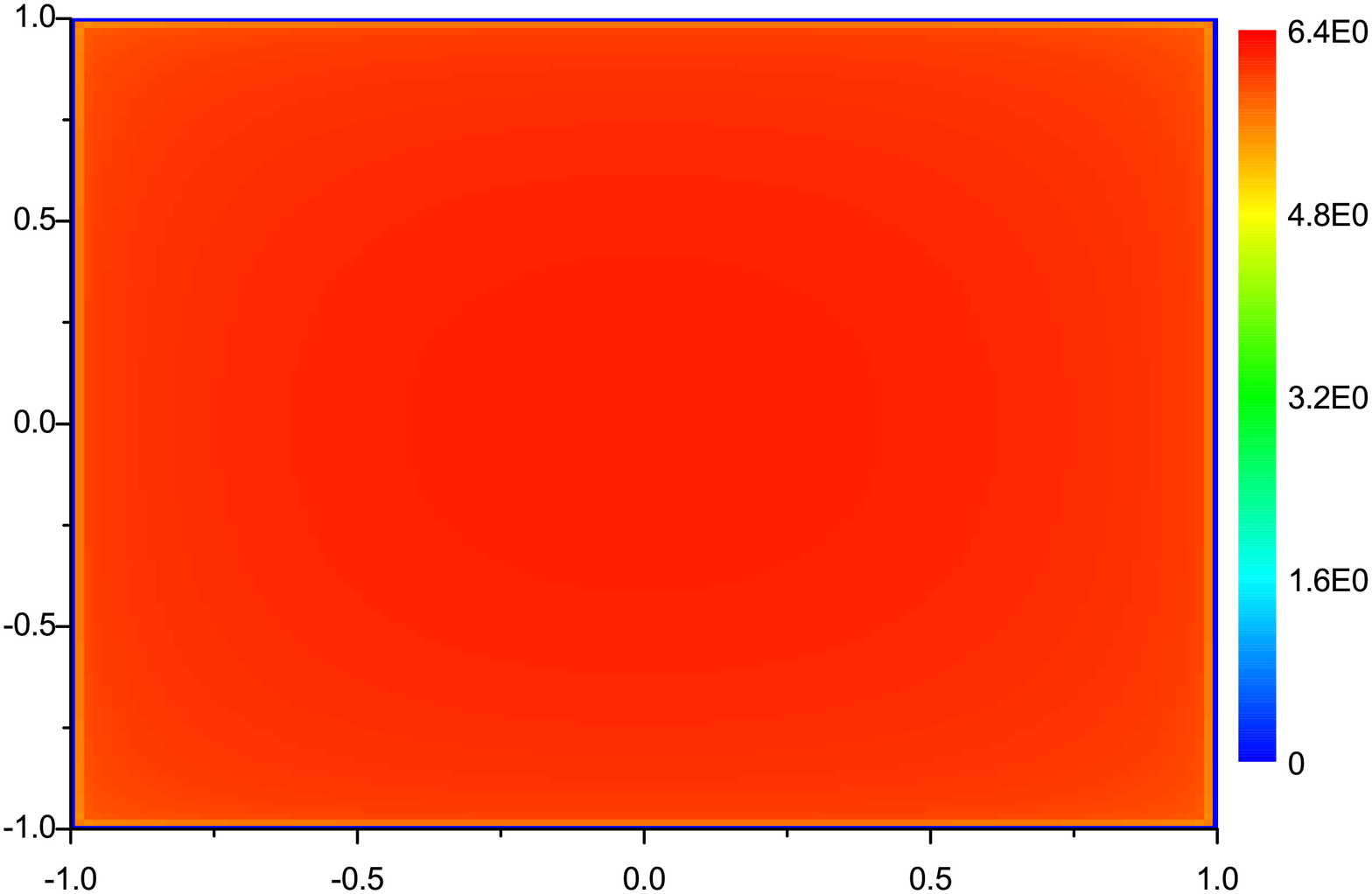}\label{fig:t12}}
	\subfigure[$s=0.4$]{
		\includegraphics[width=0.45\linewidth,angle=0]{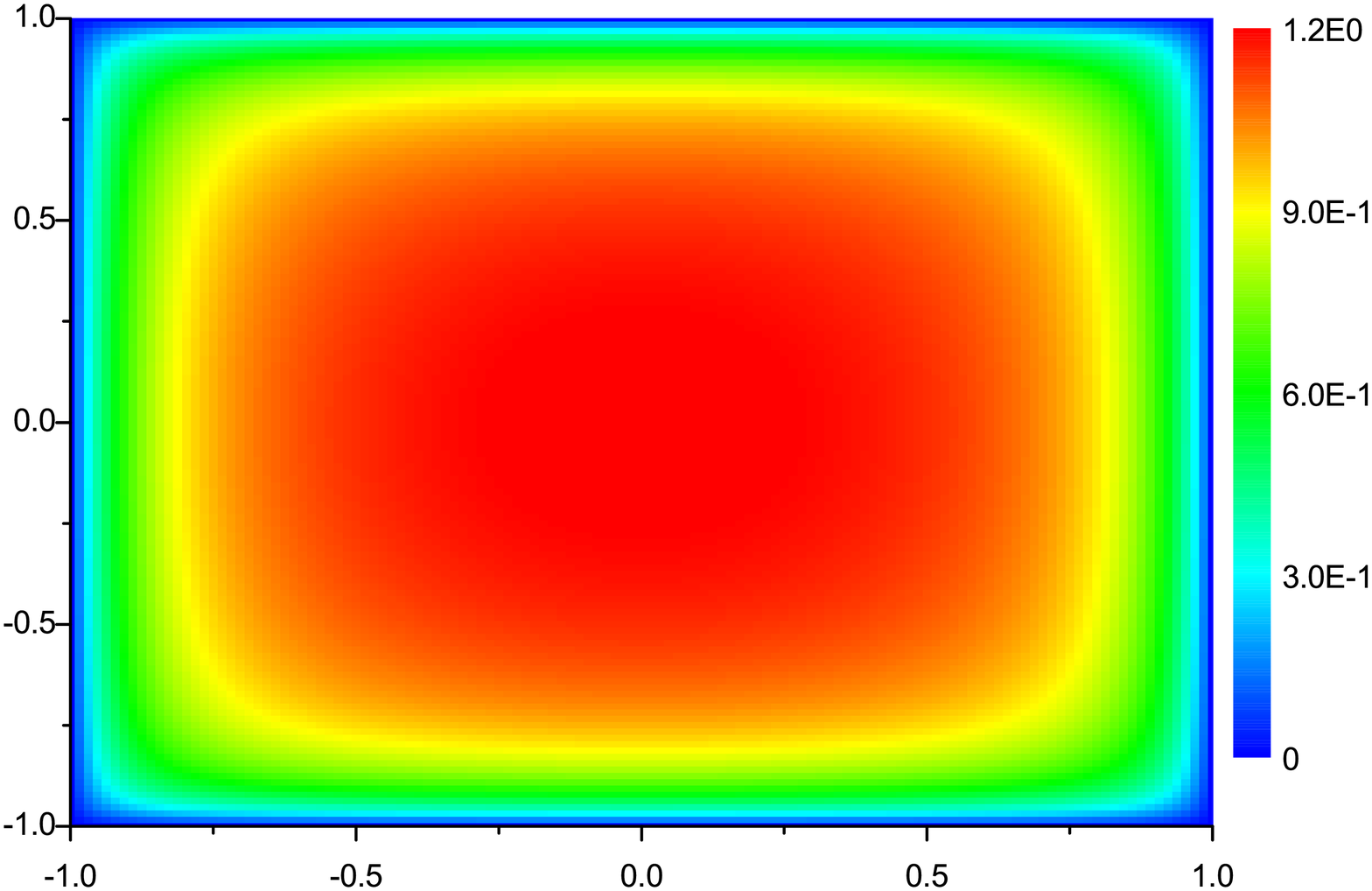}\label{fig:t14}}
	\subfigure[$s=0.6$]{
		\includegraphics[width=0.45\linewidth,angle=0]{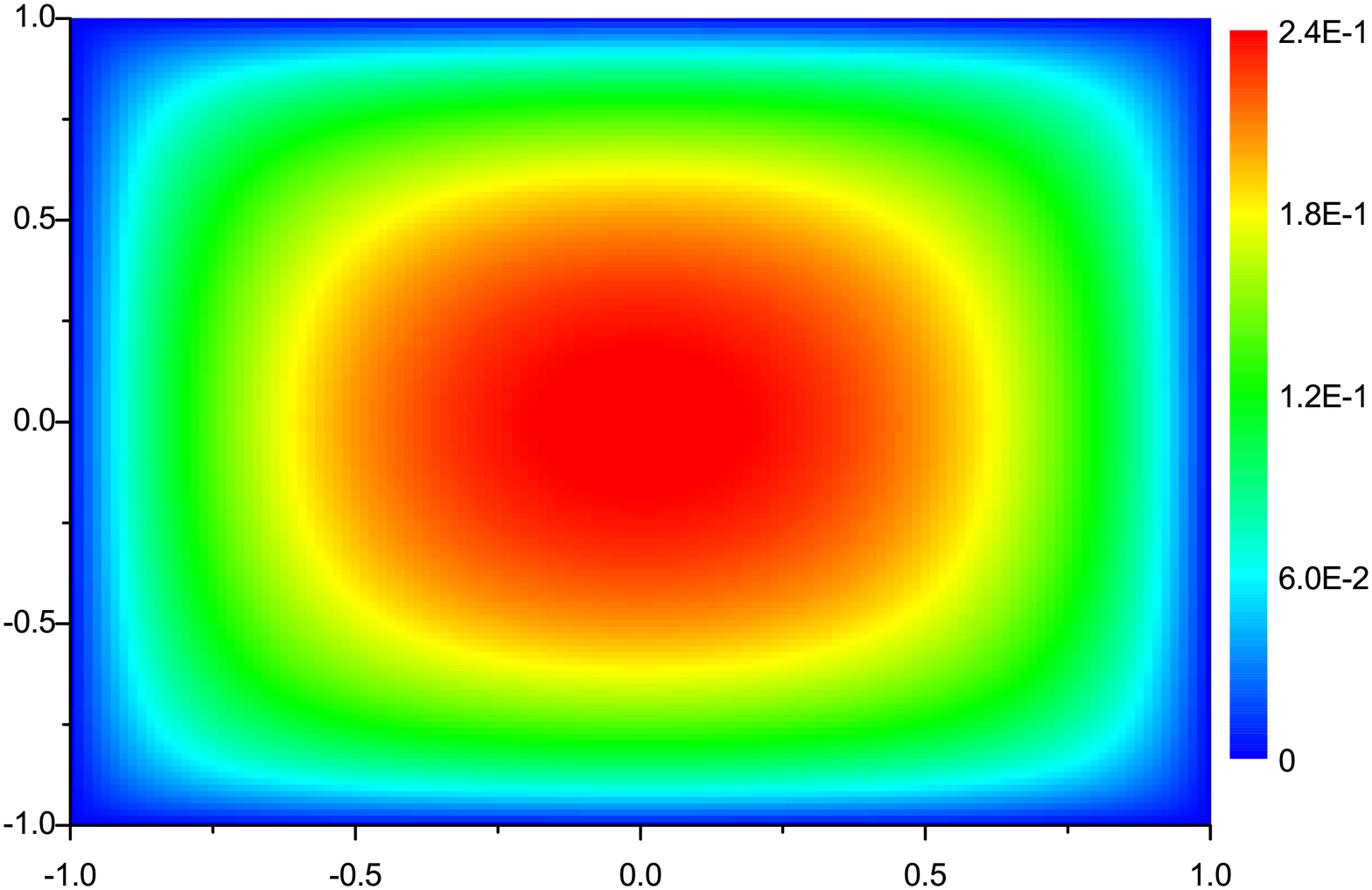}\label{fig:t16}}
	\subfigure[$s=0.8$]{
		\includegraphics[width=0.45\linewidth,angle=0]{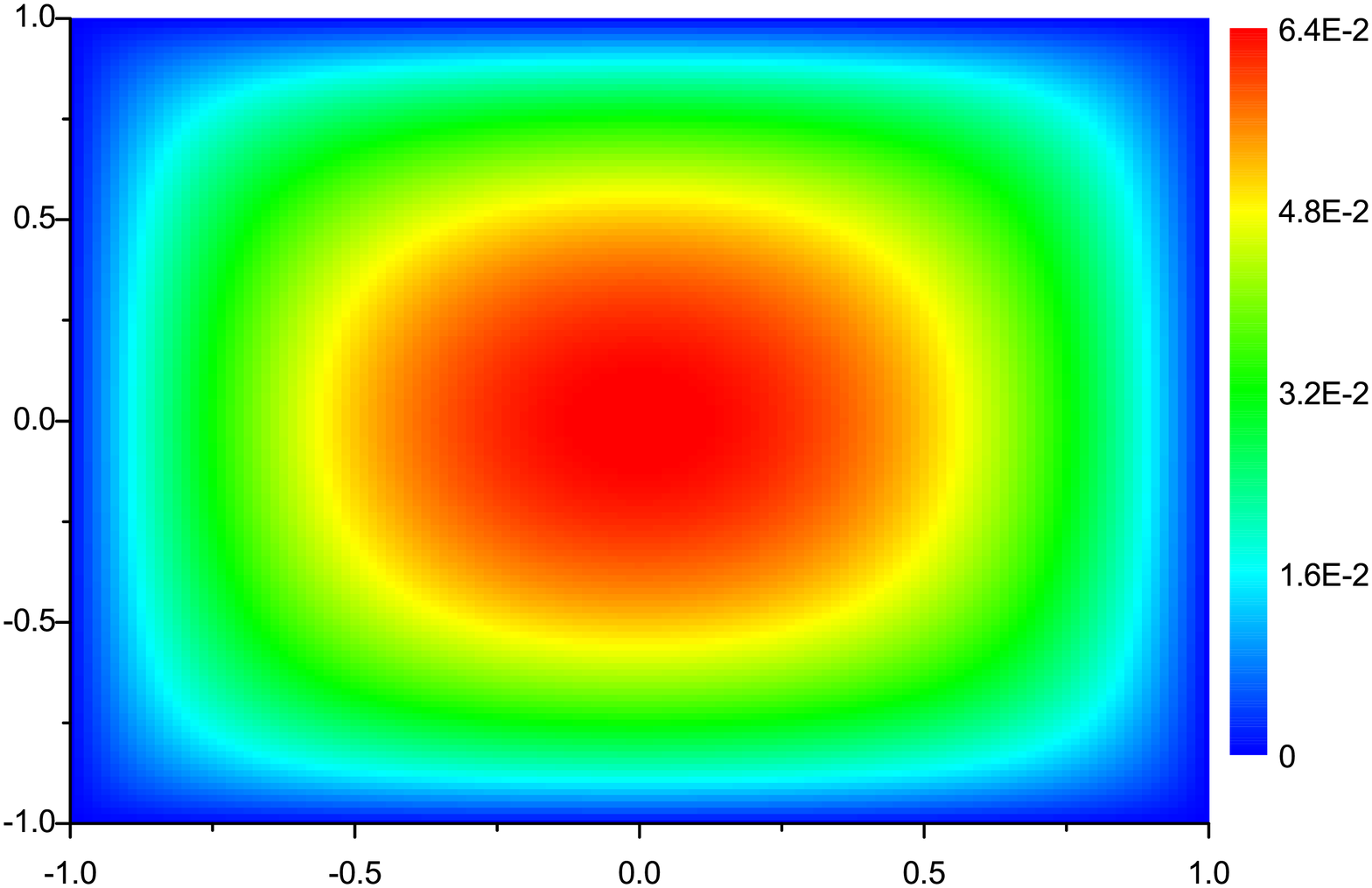}\label{fig:t18}}
	\caption{Mean exit time with $\kappa=0.5$.}
	\label{figmeanexitt}
\end{figure}

\begin{figure}
	\centering
	\subfigure[$\kappa=0.25$]{
		\includegraphics[width=0.45\linewidth,angle=0]{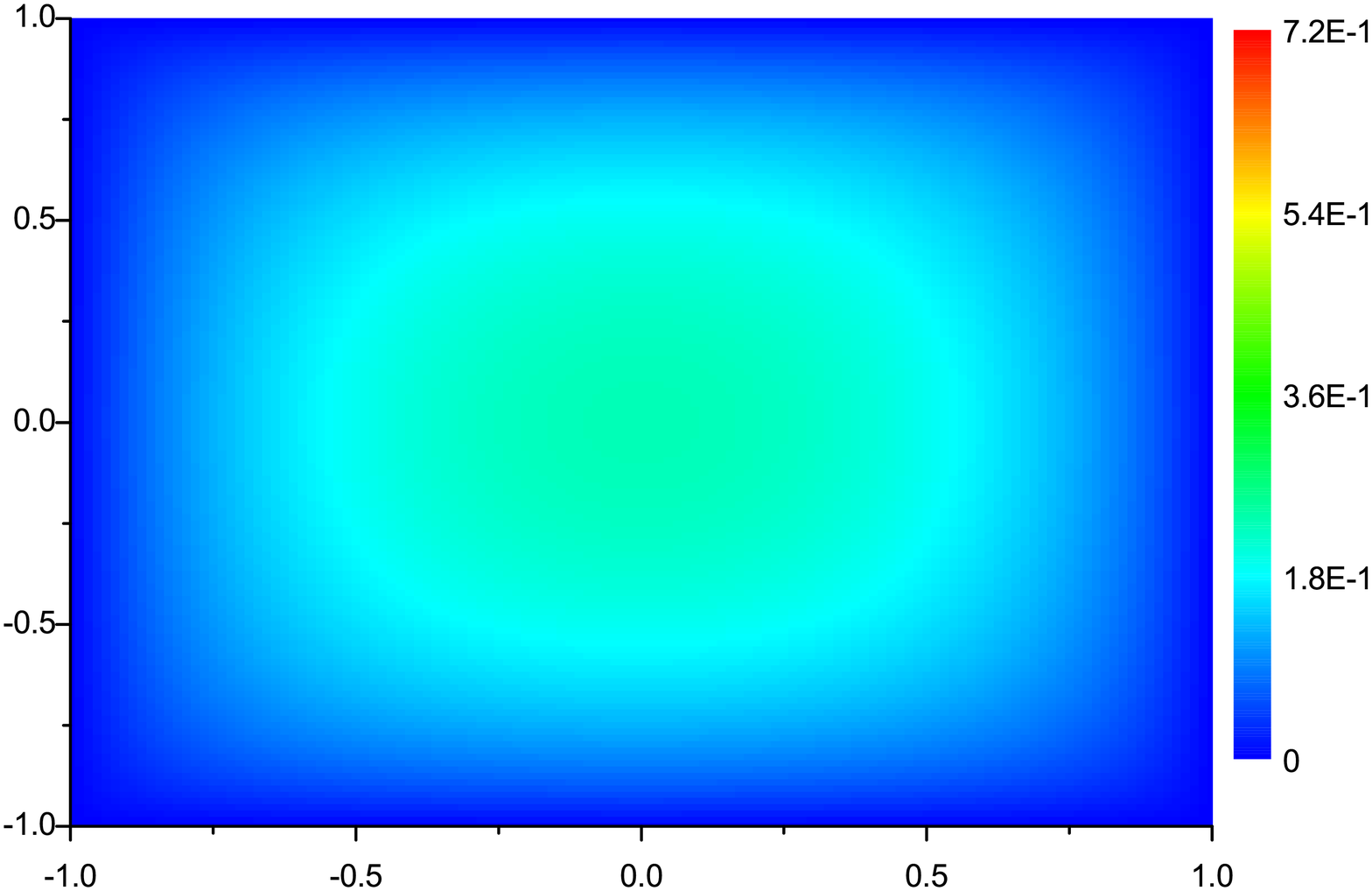}\label{fig:t25}}
	\subfigure[$\kappa=1$]{
		\includegraphics[width=0.45\linewidth,angle=0]{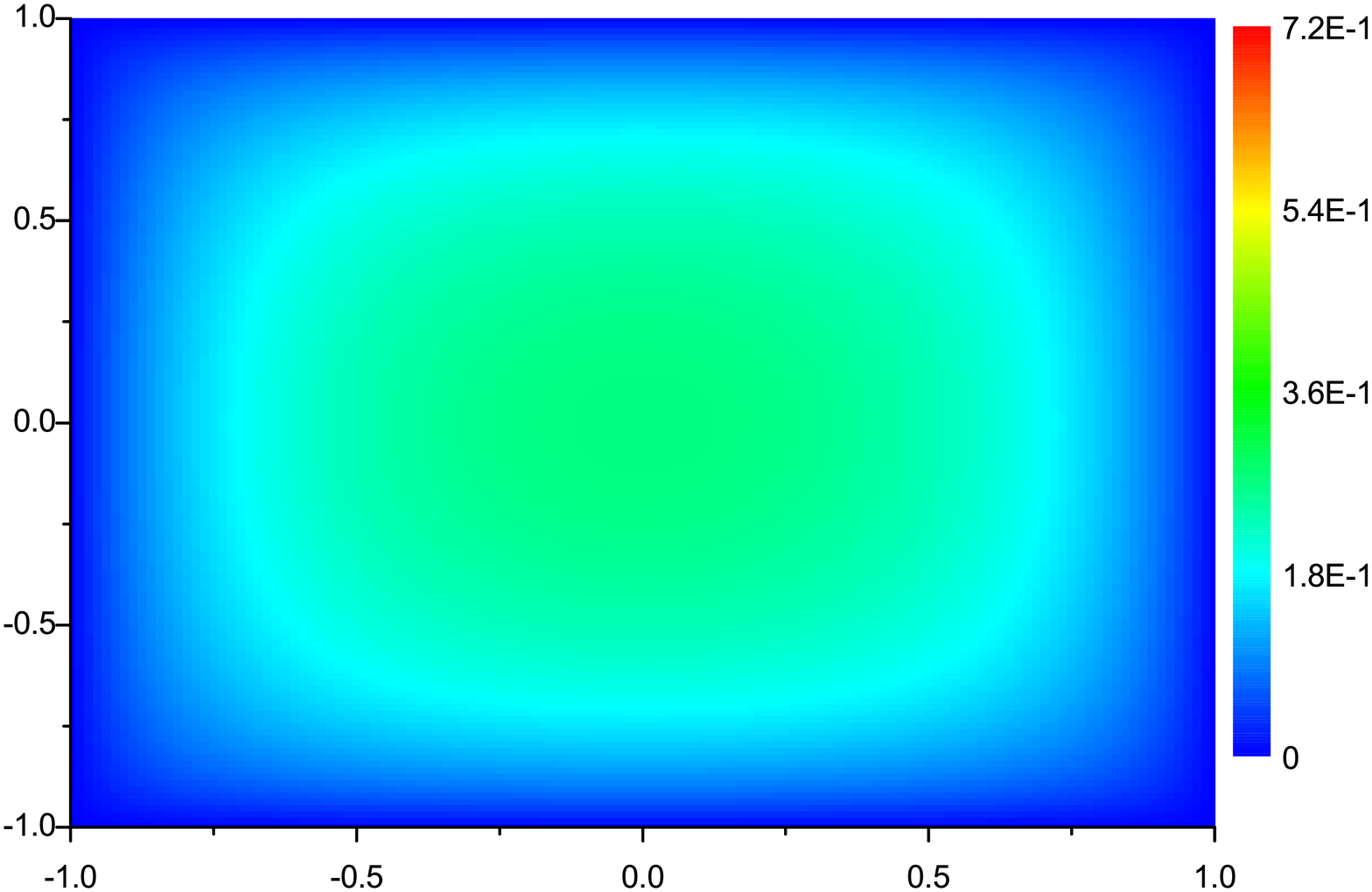}\label{fig:t100}}
	\subfigure[$\kappa=4$]{
		\includegraphics[width=0.45\linewidth,angle=0]{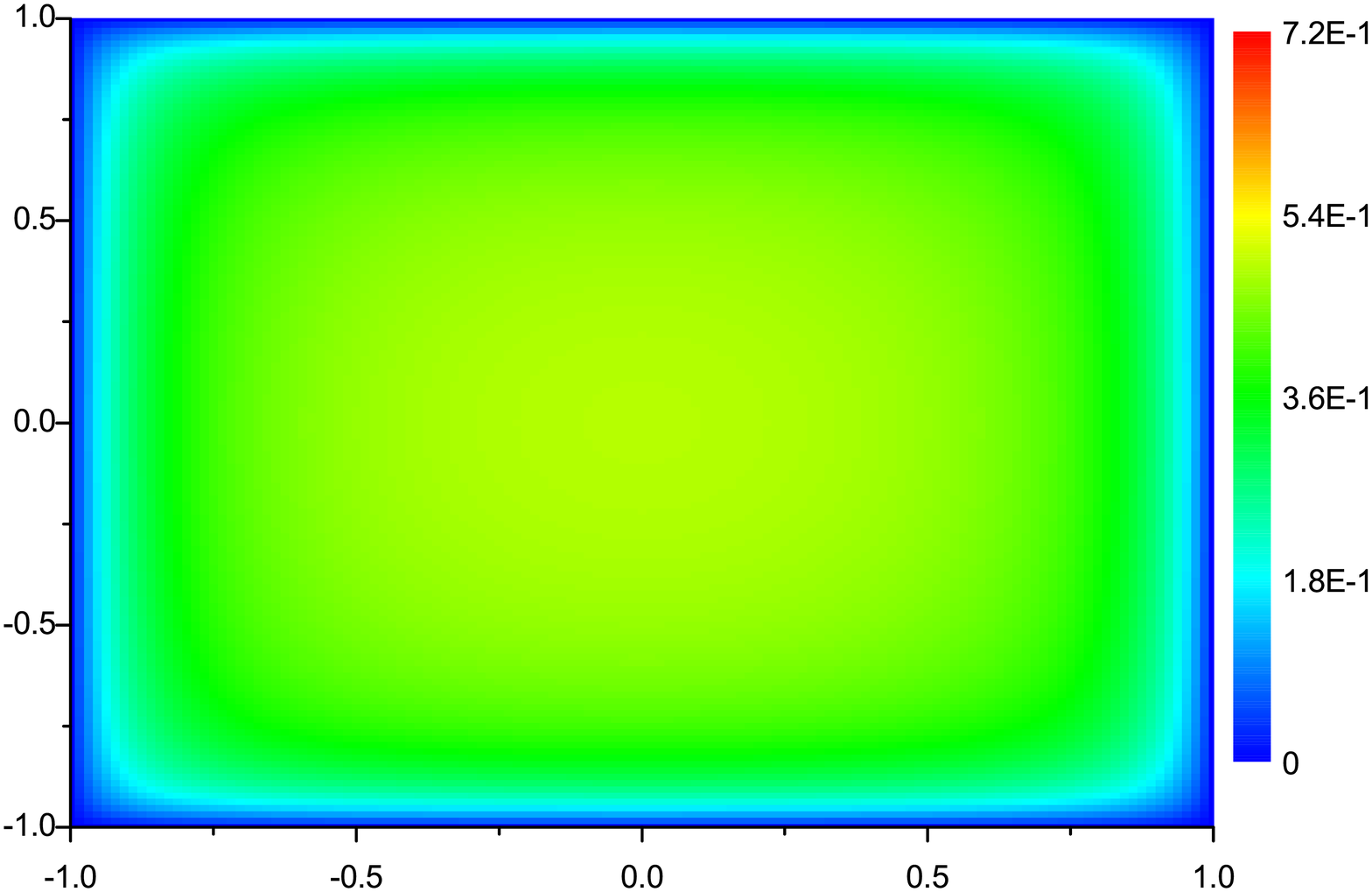}\label{fig:t400}}
	\subfigure[$\kappa=8$]{
		\includegraphics[width=0.45\linewidth,angle=0]{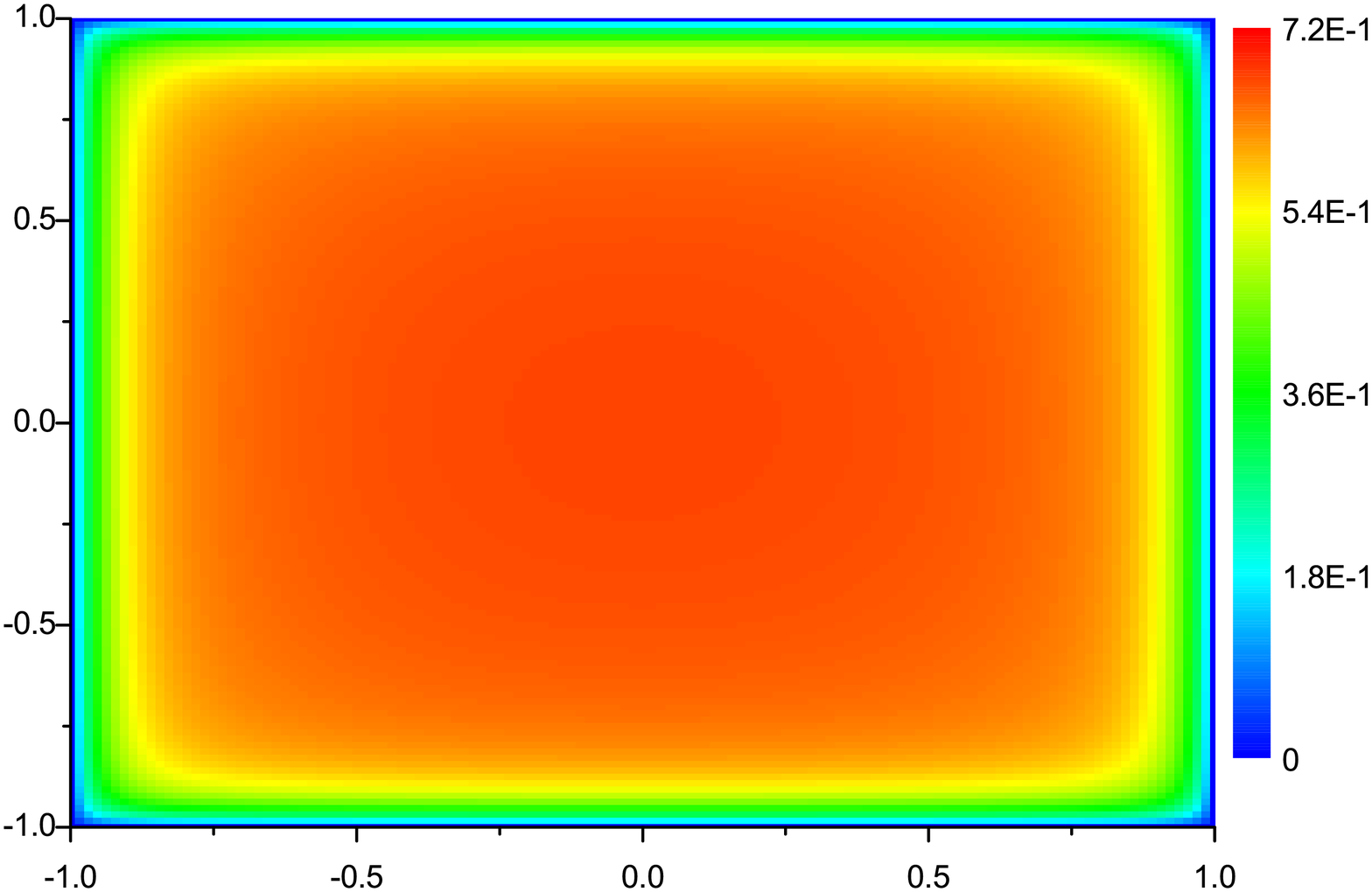}\label{fig:t800}}
	\caption{Mean exit time with $P(\mathbf{x})=\kappa(x_{1}^{2}+x_{2}^{2})$ and $s=0.6$.}
	\label{figmeanexitt2}
\end{figure}
\end{example}
\section{Conclusions}
A fundamentally new idea of discretizing the fractional Laplacian is introduced and used to solve the inhomogeneous fractional Dirichlet problem. The effectiveness of the designed scheme is ensured by the completely theoretical analyses and verified by numerical experiments. Specific applications for simulating the mean exit time of L\'evy processes under harmonic potential are provided; the effects of the strengthes of the potential and the L\'evy exponents are uncovered.

%
%In this paper, we provide a new decomposition for integral fractional Laplacian based on its Fourier transform. And then, we use the Lagrange interpolation and the finite difference methods to discretize the one- and two-dimensional fractional Laplacian. Moreover, we use our discretization to solve the inhomogeneous fractional Dirichlet problem and get the convergence by correcting our discretization schemes. Finally, the numerical experiments validate the effectiveness of our algorithm.

\section*{Acknowledgements}
This work was supported by the National Natural Science Foundation of China under Grant No. 12071195, and the AI and Big Data Funds under Grant No. 2019620005000775.

\bibliographystyle{elsarticle-num}

% Loading bibliography database
\bibliography{cas-refs}

%% The Appendices part is started with the command \appendix;
%% appendix sections are then done as normal sections
%% \appendix

%% \section{}
%% \label{}

%% If you have bibdatabase file and want bibtex to generate the
%% bibitems, please use
%%
%%  \bibliographystyle{elsarticle-num}
%%  \bibliography{<your bibdatabase>}

%% else use the following coding to input the bibitems directly in the
%% TeX file.

\end{document}